\newcommand{\blockpar}{\par}	
\newenvironment{xydiagram}{\begin{center}\begin{tabular}{c}}{\end{tabular}\end{center}}
\theoremstyle{plain}
\newtheorem{theorem}{Theorem}[section]
\newtheorem{lemma}[theorem]{Lemma}
\theoremstyle{definition}
\newtheorem{definition}[theorem]{Definition}
\theoremstyle{remark}
\newtheorem{remark}[theorem]{Remark}
\newtheorem{example}[theorem]{Example}
\newcommand{\afootnote}[1]{\footnote{\color{red} #1}}
\newcommand{\fracflat}[2]{{#1}/{#2}}	
\newcommand{\pfracTwoflat}[3]{{\partial^2{#1}}/{\partial{#2}\partial{#3}}}
\newcommand{\defemph}[1]{{\em{#1}}}	
\newcommand{\explemph}[1]{{\em{#1}}}	
\newcommand{\definitionref}[1]{Definition~\ref{#1}}
\newcommand{\theoremref}[1]{Theorem~\ref{#1}}
\newcommand{\lemmaref}[1]{Lemma~\ref{#1}}
\newcommand{\remarkref}[1]{Remark~\ref{#1}}
\newcommand{\sectionref}[1]{Section~\ref{#1}}
\newcommand{\MongeAmpere}{Monge-Amp\`ere\xspace}
\newcommand{\cpage}{p.\ }		
\newcommand{\cpages}{pp.\ }	
\newcommand{\ie}{{i.e.,}\xspace}
\newif\ifsumconv	
	\newcommand{\sumConv}[1]{}	
	\newcommand{\sumConv}[1]{#1}	
\newcommand{\eskip}{\phantom{=\quad}}	
\newcommand{\composition}{\circ}	
\newcommand{\eqspace}{\quad}	
\DeclareMathOperator{\ev}{ev}		
\newcommand{\R}{\ensuremath{\mathbb{R}}\xspace}	
\newcommand{\C}{\ensuremath{\mathbb{C}}\xspace}	 
\newcommand{\mathpara}{\S}	
\newcommand{\proj}[1]{\tilde{#1}}	
\newcommand{\ti}[1]{\tilde{#1}}	
\newcommand{\trans}[1]{\tilde{#1}}	
\newcommand{\vvM}[1]{\mathcal{X}(#1)}	
\newcommand{\vbdirsum}{\oplus}		
\newcommand{\vbdual}[1]{{#1}^\perp}	
\newcommand{\idealdualt}[1]{{#1}^\perp}	
\newcommand{\idealduald}[1]{\left({#1}\right)^\perp}	
\newcommand{\vbcompl}[1]{\textrm{compl}(#1)}
\newcommand{\E}{\mathcal{V}}	
\newcommand{\V}{\mathcal{V}}	
\newcommand{\VinD}{\subset}	
\newcommand{\W}{\mathcal{W}}	
\newcommand{\vbZ}{\mathcal{Z}}	
\DeclareMathOperator{\vbSpan}{span}
\newcommand{\vbspand}[1]{\vbSpan\left({#1}\right)}	
\newcommand{\vbspant}[1]{\vbSpan({#1})}	
\newcommand{\bundleideald}[1]{\vbSpan\left({#1}\right)}	
\newcommand{\bundleidealt}[1]{\vbSpan({#1})}	
\newcommand{\I}{\mathcal{I}}	
\newcommand{\lie}[1]{\ensuremath{\mathfrak{#1}}}	
\DeclareMathOperator{\GL}{GL}		
\DeclareMathOperator{\affine}{\lie{aff}}	
\DeclareMathOperator{\kernel}{ker}
\DeclareMathOperator{\rank}{rank}	
\DeclareMathOperator{\Gr}{Gr}		
\newcommand{\PP}{\mathbb{P}}	
\newcommand{\verz}[1]{\ensuremath{\{\,{#1}\,\}}}
\newcommand{\p}{\partial}		
\newcommand{\ed}{\text{\upshape{d}}}		
\newcommand{\rightinvariant}{right-in\-vari\-ant\xspace}
\newcommand{\leftinvariant}{left-in\-vari\-ant\xspace}
\newcommand{\constantrank}{constant-rank\xspace}
\newcommand{\finitedimensional}{finite-dimensional\xspace}
\newcommand{\hdecomposable}{de\-com\-po\-sable\xspace}
\title{Tangential symmetries of Darboux integrable systems}
\author{P. T. Eendebak \\ Department of Mathematics \\
 Utrecht University }
\begin{document}

\maketitle

\begin{abstract}

In this paper we analyze the tangential symmetries of Darboux integrable
\hdecomposable exterior differential systems.
Our definition of a \hdecomposable exterior differential system
is a generalization of the hyperbolic systems defined in~\cite[\S 1.1]{Bryant1995-I}.
This generalization of hyperbolic exterior differential systems
 includes the classic notion of Darboux integrability for
first order systems and second order scalar equations.
For Darboux integrable systems the general solution can be found by
  integration (solving ordinary differential equations). We show that this property holds for our generalized systems as well.

We give a geometric construction of the Lie algebras of tangential symmetries associated to
the Darboux integrable systems. This construction has the advantage over previous
constructions~\cite{Vessiot1939,Vessiot1942b}, \cite{Vassiliou2001-firstorder, Vassiliou2001} that our construction
does not require the use of adapted coordinates
 and works for arbitrary dimension of the underlying manifold. In particular it works for the prolongations of \hdecomposable exterior differential systems.

Independently of the author, Anderson, Fels and Vassiliou~\cite{AFV} have developed a theory similar to the one
described in this article. Their method has a more algebraic flavor and uses adapted coframes.
Their approach allows the use of symbolic software to calculate numerous examples of Darboux integrable systems.

\end{abstract}


\newcommand{\tsmaniM}{M}	
\newcommand{\tsPointM}{m}	
\newcommand{\tsPointMx}{x}	

\newcommand{\F}{\mathcal{F}}
\newcommand{\G}{\mathcal{G}}

\newcommand{\tsHEDS}{\mathcal{M}}	
\newcommand{\tsHEDSclass}[3]{(#1,#2,#3)}	

\newcommand{\tsrankF}{k}	
\newcommand{\tsrankG}{l}

\newcommand{\niF}{n_{\F}}	
\newcommand{\niG}{n_{\G}}	
\newcommand{\tsInvF}{I}
\newcommand{\tsInvG}{J}
\newcommand{\DarbouxPr}{\pi}	
\newcommand{\tsdimFiber}{s}			
\newcommand{\tsdimMani}{n}	
\newcommand{\tsIntElem}{E}	

\newcommand{\tsBase}{B}		
\newcommand{\tsBaseF}{B_1}	
\newcommand{\tsBaseG}{B_2}	
\newcommand{\tsBaseP}{b}	
\newcommand{\tsBasePF}{b_1}	
\newcommand{\tsBasePG}{b_2}	

\newcommand{\tsS}{S}	
\newcommand{\tsPointS}{s}	
\newcommand{\tsSB}{U}	
\newcommand{\tsnInvariants}{k}


\newcommand{\tsev}[2]{\ev(#1)_{#2}}		

\newcommand{\tsf}{\lie{f}}	
\newcommand{\tsg}{\lie{g}} 

\newcommand{\tsTangentV}{\lie{z}}		
\newcommand{\tsTangentVFib}[1]{\tsTangentV_{#1}}	
\newcommand{\tsTSf}{\tilde{\lie{f}}}		
\newcommand{\tsTSg}{\tilde{\lie{g}}}		

\newcommand{\tsfdF}[1]{\tsf'_{\tsFiberI{#1}}}	
\newcommand{\tsgdF}[1]{\tsg'_{\tsFiberI{#1}}}

\newcommand{\tsFiber}[2]{{#1}_{#2}}		
\newcommand{\tsFiberI}[1]{#1}			
\newcommand{\tsResFiber}[1]{\rho_{\tsFiberI{#1}}}		

\newcommand{\tsTLF}{L}			
\newcommand{\tsTLG}{R}			

\newcommand{\tsCenter}{\lie{c}}		
\newcommand{\tsCenterF}[1]{\tsCenter_{\tsFiberI{#1}}}	
\newcommand{\tsCenterDistr}{\mathcal{C}}	

\newcommand{\tsSCmap}{\ti{\mu}}	
\newcommand{\tsSCtrmap}{{\mu}} 

\newcommand{\glC}{\tsSCtrmap}	

\newcommand{\sumAe}{i}	
\newcommand{\sumAt}{j}
\newcommand{\sumBe}{\alpha}	
\newcommand{\sumBt}{\beta}
\newcommand{\sumTe}{t}		



\section*{Introduction}

Darboux integrability of second order scalar equations was introduced by
Gaston Darboux~\cite{Darboux1870,Darboux1894I}.
Generalizations to many other systems are known, such as first
order systems \cite{Vassiliou2001}, hyperbolic exterior differential systems \cite{Bryant1995-I}
and elliptic equations~\cite{McKay1999}.

An important step in the classification of Darboux integrable equations was given by
Vessiot in the papers~\cite{Vessiot1939,Vessiot1942b}. Vessiot associates to
each second order equation Darboux integrable at order 2
 a Lie algebra of dimension 3 or less. He then uses the classification of
3-dimensional Lie algebras and some further computations to give a complete
classification of the Darboux integrable equations up to contact transformations.
Vassiliou~\cite{Vassiliou2001-firstorder, Vassiliou2001} gives a more geometric interpretation of the Lie algebras, which he calls
Lie algebras of tangential symmetries. The Lie algebras of Vessiot and Vassiliou are constructed using calculations in local coordinates.

In this article we introduce the concept of \hdecomposable exterior differential systems.
 The decomposable systems include all classical Darboux integrable equations as well as 
hyperbolic exterior differential systems.
 We give a geometric construction of the Lie algebras of tangential symmetries.
This geometric construction works for all dimensions of the systems. This is in
contrast with the theory developed by Vessiot and Vassiliou who only give their results
for specific dimensions.

The geometric construction has some limitations as well. The constructions requires
solving ordinary differential equations, which makes it difficult to perform
the construction in concrete examples. 
For small dimensions Vessiot used the classification of \finitedimensional Lie algebras
to classify the Darboux integrable equations.
There are two reasons why this approach fails for more our more general systems, but even
for equations that are Darboux integrable at higher order. The first reason
is that there is no tractable classification of general Lie algebras of high dimension.
The second reason is that there is no explicit construction of a Darboux integrable 
system from a given \finitedimensional Lie algebra.

In this paper we assume that, unless stated otherwise, all objects are smooth and (if applicable) of constant rank. If it is necessary to restrict
to open neighborhoods to make our constructions, then we will not
always mention this explicitly. 
\begin{sumconvc}
 In the paper we use the summation convention.
\end{sumconvc}
Part of the results in this paper have been published already, with proofs and more examples,
in~\cite[Chapter 10]{Eendebak2006}.


\section{Decomposable exterior differential systems}

In this section we give the definition of Darboux integrability for \hdecomposable exterior differential systems.
In \sectionref{section:distributions} we give a brief review of distributions,
in \sectionref{section:Darboux integrable decomposable systems} we give the main
definition of Darboux integrable \hdecomposable systems and finally in section 
\sectionref{subsection:other systems} we compare our definition to the
classic definitions and give some examples.

\subsection{Distributions}
\label{section:distributions}

The objects dual to (\constantrank) Pfaffian systems are distributions.
In this paper we will assume, unless stated otherwise, that all distributions are smooth and locally of constant rank.

\begin{definition}
A \defemph{distribution}\index{distribution} on a smooth manifold $\tsmaniM$ of rank $k$ is a subbundle of
the tangent bundle $T\tsmaniM$ of rank $k$.
\end{definition}
\blockpar
The distribution spanned by the vector fields $X_1, \ldots, X_n$ is denoted by $\vbspant{X_1,\ldots, X_n}$.\index{$span(X1...)$@$\vbspant{X^1,\ldots, X^n}$}

For a distribution $\V$ and a vector field $X$ we say that $X$ is
 \defemph{contained in $\V$ pointwise}
(or just \defemph{contained in $\V$})
 and write $X \VinD \V$ if $X_\tsPointM \in \V_\tsPointM$ for all points $\tsPointM$.
If $X$ is not contained in $\V$ this means that there exists a point $\tsPointM$
such that $X_m \not\in \V_\tsPointM$. This does not imply that
$X_\tsPointM \not\in \V_\tsPointM$ for all points $\tsPointM$. We will say that
$X$ is \defemph{pointwise not contained} in $\V$  if the stronger
statement, that $X_\tsPointM \not\in \V_\tsPointM$ for all $\tsPointM$, holds.


\paragraph{Invariants}

An \defemph{invariant}
for a distribution $\E$ is 
a function $I$ on $M$ such that $X(I)=0$ for all $X \subset \E$.
This is equivalent to $\E \subset \kernel(\ed I)$.
Classically, the invariants of a distribution are called
\defemph{first integrals}\index{first integral} ~\cite[\cpage 95]{Gardner1993}, \cite[\cpages 10, 289]{Stormark2000}.
We say that $\tsnInvariants$ invariants $I^1, \ldots, I^\tsnInvariants$
 are \defemph{functionally independent}\index{functionally independent}
at a point $x$ if the rank of the Pfaffian system $\bundleidealt{\ed I^1,\ldots, \ed I^\tsnInvariants}$ is equal to $\tsnInvariants$ at $x$.
By the Frobenius theorem 
an integrable rank $k$ distribution on an $n$-dimensional
manifold has locally precisely $n-k$ functionally independent invariants.

\paragraph{Projections and lifting}

Let $\phi: \tsmaniM \to B$ be a smooth map. If $\phi$ is a diffeomorphism, then
we can define the push forward $\phi_* X$ of a vector field $X$
at $y=\phi(x)$ as $(\phi_* X)_y=(T_x \phi)X_x$. Locally we can
define the push forward of a vector field under an immersion in the same way.
If $\phi$ is a smooth map, then in general there is no push forward of
a vector field $X$.
 If for all points $x^1, x^2$ with $\phi(x^1)=\phi(x^2)$ the
vectors $(T_{x^1} \phi)(X)$ and $(T_{x^2} \phi)(X)$ are equal, we say that $X$ \defemph{projects down} to $B$ and we write
$\phi_* X$ for the projected vector field.
In a similar way, we can project distributions $\V$ on $M$ to $B$
if for all points $x$ in the fiber $\phi^{-1}(y)$ the image
$(T_x \phi)(\V)$ is equal to a fixed linear subspace $\W_{\phi(x)}$ of
$T_{\phi(x)}B$.

\begin{example}
Let $\phi:\R^2 = \R \times \R \to \R$ be the projection onto the first component. On
$\R^2$ take coordinates $x, y$ and define the vector fields 
\begin{align*}
X &= x \p_x , \quad
Y = x\p_x + y\p_y , \quad
Z = (1+y^2)\p_x .
\end{align*}
Under the map $\phi$ the vector fields $X$ and $Y$ project to the base manifold, the vector field $Z$
does not project. The bundle $\vbZ=\vbspant{Z}$ does project to $\R$.
\end{example}

\newcommand{\vecV}{X}\newcommand{\vecW}{Y}

\begin{lemma}[Lie brackets of projected vector fields]
\label{lemma:Lie brackets and projection}
Let $\pi:\tsmaniM \to B$ be a smooth map. Let $\vecV$, $\vecW$ be two vector fields on $\tsmaniM$
that project to vector fields $\proj{\vecV}=\pi_* \vecV$ and $\proj{\vecW}=\pi_* \vecW$ on $B$, respectively.
Then the commutator $[\vecV,\vecW]$ projects down to $B$ and 
$\pi_*[\vecV,\vecW]=[\proj{\vecV},\proj{\vecW}]$.
\end{lemma}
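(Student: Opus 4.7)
The plan is to translate the paper's ``projects down'' condition into the standard notion of $\pi$-relatedness of vector fields and then apply the well-known identity that brackets of $\pi$-related vector fields are $\pi$-related.

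First I would observe that the hypothesis ``$\vecV$ projects to $\proj{\vecV}$'' is exactly the statement that $\vecV$ and $\proj{\vecV}$ are $\pi$-related, in the sense that
\[
T\pi \composition \vecV = \proj{\vecV} \composition \pi
\]
holds at every point of $\tsmaniM$ (restricting to the image of $\pi$ if necessary, which we may do after shrinking $B$). Indeed, the definition given in the preceding paragraph says precisely that $(T_x\pi)(\vecV_x)$ depends only on $\pi(x)$, and this common value is taken to be $\proj{\vecV}_{\pi(x)}$ by definition of the push-forward. The analogous statement holds for $\vecW$ and $\proj{\vecW}$.

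Next I would use the standard functional characterization of $\pi$-relatedness: $\vecV$ and $\proj{\vecV}$ are $\pi$-related if and only if for every $f \in C^\infty(B)$,
\[
\vecV(f \composition \pi) = (\proj{\vecV} f) \composition \pi.
\]
This follows by evaluating both sides on tangent vectors, using the chain rule $\vecV(f\composition\pi)_x = (T_x\pi\,\vecV_x)(f)$. With this reformulation in hand, the core computation is a one-line application of the Jacobi-style manipulation: for any $f \in C^\infty(B)$,
\[
[\vecV,\vecW](f\composition\pi) = \vecV\bigl((\proj{\vecW}f)\composition\pi\bigr) - \vecW\bigl((\proj{\vecV}f)\composition\pi\bigr) = \bigl(\proj{\vecV}\proj{\vecW}f - \proj{\vecW}\proj{\vecV}f\bigr)\composition\pi = \bigl([\proj{\vecV},\proj{\vecW}]f\bigr)\composition\pi,
\]
where each equality uses the functional characterization applied once. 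Since this holds for all test functions $f$ on $B$, it shows that $[\vecV,\vecW]$ is $\pi$-related to $[\proj{\vecV},\proj{\vecW}]$, which by the first step is exactly the statement that $[\vecV,\vecW]$ projects down and $\pi_*[\vecV,\vecW] = [\proj{\vecV},\proj{\vecW}]$.

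There is no real obstacle here; the only mildly delicate point is the equivalence between the paper's pointwise ``projects down'' formulation and $\pi$-relatedness, together with the (implicit) smoothness of $\proj{\vecV}$ and $\proj{\vecW}$ on $B$, which is needed in order to evaluate expressions like $\proj{\vecV}\proj{\vecW}f$. Since the excerpt assumes smoothness and constant rank throughout, this is automatic, and the rest of the argument is a purely algebraic manipulation of derivations acting on pulled-back functions.
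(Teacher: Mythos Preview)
The paper states this lemma without proof; it is a standard fact. Your argument via $\pi$-relatedness and the functional characterization $\vecV(f\composition\pi)=(\proj{\vecV}f)\composition\pi$ is correct and is the usual textbook proof, so there is nothing to compare.
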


\subsection[Darboux integrable decomposable systems]{Darboux integrable decomposable exterior differential systems}
\label{section:Darboux integrable decomposable systems}

\begin{definition}
\label{definition:decomposable EDS:ideal}
Let $\tsmaniM$ be a manifold of dimension $\tsdimMani=\tsdimFiber+\tsrankF+\tsrankG$
with an exterior differential
system $\I$ such that locally there exists a coframing
of $\tsmaniM$ of the form $\theta^1,\ldots,\theta^\tsdimFiber$,
$\omega^1,\ldots,\omega^\tsrankF$, $\omega^{\tsrankF+1},\ldots,\omega^{\tsrankF+\tsrankG}$ such
that $\I$ is generated algebraically by the forms
\begin{subequations}
\begin{align}
&\theta^1,\ldots,\theta^\tsdimFiber, \\
&\omega^\sumAe \wedge \omega^\sumAt, \quad 1 \leq \sumAe,\sumAt \leq \tsrankF, \\
&\omega^\sumBe \wedge \omega^\sumBt, \quad \tsrankF+1 \leq \sumBe,\sumBt \leq \tsrankF+\tsrankG .
\end{align}
\label{equations:structure equations HEDS}
\end{subequations}
We call $\tsHEDS=(\tsmaniM,\I)$ a \defemph{\hdecomposable exterior differential system} or a \defemph{\hdecomposable system}.
We define $I=\bundleidealt{\theta^1,\ldots,\theta^\tsdimFiber}$ and
$\V = \idealdualt{I}$.
A coframing satisfying the structure equations~\eqref{equations:structure equations HEDS}
is called an \defemph{admissible local coframing}.
\end{definition}
Note that for $\tsrankF=\tsrankG=2$ this definition corresponds to
the definition in~\cite[p. 29]{Bryant1995-I}.
We say the system has \defemph{(extended) class} $\tsHEDSclass{\tsdimFiber}{\tsrankF}{\tsrankG}$.
The systems in~\cite{Bryant1995-I} are
all of class $\tsHEDSclass{\tsdimFiber}{2}{2}$ for $\tsdimFiber \geq 0$.

\begin{acomment}
 The condition that $\I$ is closed implies that the following
structure equations hold
\begin{align*}
 \ed \theta^\sumTe &\equiv A^\sumTe_{\sumAe\sumAt} \omega^\sumAe \wedge \omega^\sumAt
		+ B^\sumTe_{\sumBe\sumBt} \omega^\sumBe \wedge \omega^\sumBt
	\mod \theta^1,\ldots,\theta^\tsdimFiber
\end{align*}
\end{acomment}

Every \hdecomposable system invariantly defines two distributions
\begin{align}
\begin{split}
\F &= \idealduald{\bundleideald{\theta^1,\ldots,\theta^\tsdimFiber,\omega^{\tsrankF+1},\ldots,\omega^{\tsrankF+\tsrankG}}}, \\ 
\G &=\idealduald{\bundleideald{\theta^1,\ldots,\theta^\tsdimFiber,\omega^{1},\ldots,\omega^{\tsrankF}}} .
\end{split}
\label{equation:decomposable EDS:ideal to distribution}
\end{align}
This motivates the following alternative definition of a \hdecomposable exterior
differential system.
\afootnote{Anderson calls $\F^\perp$ and $\G^\perp$ the \defemph{singular Pfaffian systems}.}
\begin{definition}
\label{definition:decomposable EDS:distribution}
Let $\tsmaniM$ be an $\tsdimMani$-dimensional manifold with two distributions $\F, \G$.
We call the triple $\tsHEDS=(\tsmaniM,\F, \G)$
\defemph{\hdecomposable system} on $\tsmaniM$ if $\F, \G$ are of constant rank and the following 
conditions hold:
\begin{enumerate}
\item The intersection of $\F$ and $\G$ is empty.
\label{enum:empty intersection}
\item $[\F,\G] \equiv 0 \mod \F \vbdirsum \G$.
\label{enum:condition closed}
\end{enumerate}
\end{definition}
\blockpar
It is not difficult to check that with the correspondence
\eqref{equation:decomposable EDS:ideal to distribution} the two
definitions~\ref{definition:decomposable EDS:ideal}
and \ref{definition:decomposable EDS:distribution} are locally equivalent.
Globally the definitions are equivalent if we allow to interchange
the two distributions $\F$ and $\G$.
The last condition in \definitionref{definition:decomposable EDS:distribution} above corresponds
to the corresponding exterior differential system being closed.
\begin{acomment}
The theory to be developed is formulated more easily in terms of distributions
and hence we will work from now on with \definitionref{definition:decomposable EDS:distribution}.
\end{acomment}

The distributions $\F,\G$ are called
the \defemph{characteristic systems} of the \hdecomposable system.
We denote by $\niF$ and $\niG$ the number of invariants of the bundles $\F$ and $\G$,
respectively. We define $\V=\F \vbdirsum \G$. In terms of exterior differential systems
$\V=\vbdual{I}$.
If the distribution $\V$ has invariants, then the completion of $\V$ is an integrable
distribution and we can restrict our structures to the leaves of $\vbcompl{\V}$.
The leaves of the integrable
distribution $\vbcompl{\V}$ each carry the structure
of a \hdecomposable system. For this reason we will assume
from here on that $\V$ has no invariants.
\begin{acomment}
  In terms of the other definition this
condition corresponds to the condition that
the infinite derived system $I^{(\infty)}$ is trivial.
\end{acomment}

The Darboux integrability of a \hdecomposable system is determined by the invariants
of the characteristic systems.
\begin{definition}
\label{definition:Darboux integrable decomposable system}
Let $(\tsmaniM,\F,\G)$ be a \hdecomposable system without invariants. The
system is \defemph{Darboux integrable} if $\F$ has
at least $\rank(\G)$ functionally independent invariants $\tsInvF^1,\ldots \tsInvF^{\rank(\G)}$
and
 $\G$ has at least $\rank(\F)$ functionally independent invariants $\tsInvG^1,\ldots \tsInvG^{\rank(\F)}$ such that
$\bundleidealt{\ed \tsInvF^1,\ldots, \ed \tsInvF^{\rank(\G)}, \ed \tsInvG^1, \ldots, \ed\tsInvG^{\rank(\F)}} \cap I=0.$
\end{definition}
\blockpar
\begin{acomment}
 The last condition ensures that the Darboux projection $\DarbouxPr$ is transversal to $\V$.

\end{acomment}
In the case that the \hdecomposable system is given by the contact
structure of a system of partial differential equations, then the Darboux integrability
corresponds to the classical notion of Darboux integrability.
One of the main properties of Darboux integrable equations,
 namely the construction of integral
 manifolds by integration (see \sectionref{subsection:lifting solutions}),
is also present for our Darboux integrable hyperboloc systems.
The definition above corresponds to the definition of
Darboux integrability in~\cite{Bryant1995-I} in the case
that $\rank(\F)=\rank(\G)=2$.
In \sectionref{section:Lie algebras of tangential symmetries} we will see that the Darboux integrability property leads
to a very rigid structure on the manifold.

\begin{example}
Let $\tsmaniM=\R^2$ with coordinates $x,y$. Then we can take $\F$ to be
spanned by the vector field $\p_x$ and $\G$ spanned by $\p_y$.
The bundle $\F$ has $y$ as an invariant, the bundle $\G$ has $x$ as an invariant.
The triple $(\tsmaniM,\F,\G)$ is a Darboux integrable \hdecomposable system.

In fact for every direct product $\tsmaniM_1 \times \tsmaniM_2$ the distributions $\F=T\tsmaniM_1$ and $\G=T\tsmaniM_2$ define
a Darboux integrable \hdecomposable system on $\tsmaniM_1 \times \tsmaniM_2$.
\end{example}

In this paper we will require the slightly more restrictive condition that
the number of invariants for each of the bundles $\F,\G$ is \explemph{equal} to the rank of
the other bundle. This together with the assumption of no invariants
leads to the following 2 conditions.
\begin{subequations}
\begin{align}
\dim  \vbcompl{\V} =\dim \tsmaniM \quad \text{(no invariants)}, 
\label{equation:condition no invariants}
\end{align}
\begin{align}
\niF=\rank(\G) \quad \text{ and } \quad \niG=\rank(\F) .
\label{equation:condition rank-invariants}
\end{align}
\label{decomposable EDS:assumptions}
\end{subequations}
In the case that $\niF > \rank(\G)$ or $\niG>\rank(\F)$ we can still
carry out the constructions described below, but with some
technical modifications.

Given a Darboux integrable system $\F$, $\G$ there is a natural projection onto the space of invariants. The completions of $\F$ and $\G$ are
integrable and hence they define a foliation of $M$ of codimension $\niF$ 
and $\niG$, respectively.
\begin{definition}
\label{definition:Darboux projection}
Let $(\tsmaniM,\F,\G)$ be a \hdecomposable system
satisfying \eqref{decomposable EDS:assumptions}.
Locally define $\tsBaseF$ to be the quotient of $\tsmaniM$ by the completion of $\G$ and
$\tsBaseG$ to be the quotient of $\tsmaniM$ by the completion of $\F$. Let $\DarbouxPr_1$ and $\DarbouxPr_2$ be
the projection of $\tsmaniM$ on $\tsBaseF$ and $\tsBaseG$, respectively.
The projection $\DarbouxPr = \DarbouxPr_1 \times \DarbouxPr_2: \tsmaniM \to \tsBase = \tsBaseF \times \tsBaseG$ is a natural projection
onto the manifold $\tsBase=\tsBaseF \times \tsBaseG$ of dimension $\niG + \niF$.
We call such a projection a \defemph{Darboux projection}\index{Darboux projection}.
\end{definition}
\blockpar
\begin{acomment}
 The Darboux projection is a projection onto the space of invariants. If $\niF > \rank \G$
or if $\niG > \rank \F$, then we have to project to a \explemph{subspace} of the space
of invariants such that the projection is transversal to the distribution $\V$.
In this paper we will not discuss this possibility.
\end{acomment}
The tangent spaces to the fibers of the Darboux projection are equal to the integrable distribution
$\vbZ = \vbcompl{\F} \cap \vbcompl{G} = \ker \DarbouxPr$.
We write $\tsTangentV$ 
 for
the Lie algebra of vector fields tangent to the projection.
The vector fields in $\tsTangentV$ are precisely the
vector fields in the distribution
$\vbZ$.
The invariants of $\F$ and $\G$ are precisely the functions in
 $\DarbouxPr_1^*(C^\infty(\tsBaseF))$ and $\DarbouxPr_2^*(C^\infty(\tsBaseG))$, respectively.

There is a natural isomorphism of $T(\tsBaseF \times \tsBaseG)$
with $T\tsBaseF \otimes T\tsBaseG$. Using this identification we have the following lemma.
\begin{lemma}
\label{lemma:F and G project}
The distributions $\F$ and $\G$ project to $\tsBase$. The image of $\F$ is equal 
to $T\tsBaseF \times \verz{0}$ and the image of $\G$ is equal to $ \verz{0} \times T\tsBaseG$.
\end{lemma}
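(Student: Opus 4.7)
The plan is to treat the two components $\DarbouxPr_1$ and $\DarbouxPr_2$ of the Darboux projection separately. For $\DarbouxPr_2$, note that $\ker T\DarbouxPr_2 = \vbcompl{\F}$ by construction, and since $\F \subset \vbcompl{\F}$ we immediately obtain $T\DarbouxPr_2(\F) = 0$ pointwise. For $\DarbouxPr_1$ I must check two things: that $T_\tsPointM\DarbouxPr_1(\F_\tsPointM)$ depends only on $\DarbouxPr_1(\tsPointM)$ (so that $\F$ projects in the sense of \sectionref{section:distributions}), and that this image fills all of $T\tsBaseF$.

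Both facts follow from the pointwise transversality $\F_\tsPointM \cap \vbcompl{\G}_\tsPointM = 0$. Since $\vbcompl{\G}$ is cut out by the invariants of $\G$ and assumption~\eqref{equation:condition rank-invariants} gives exactly $\tsrankF$ functionally independent ones $\tsInvG^1,\ldots,\tsInvG^{\tsrankF}$, we have $\vbcompl{\G}_\tsPointM = \kernel(\ed\tsInvG^1,\ldots,\ed\tsInvG^{\tsrankF})_\tsPointM$. I would prove transversality by showing that the restrictions $\ed\tsInvG^b|_{\F_\tsPointM}$ are linearly independent. A linear combination $\sum_b c_b\,\ed\tsInvG^b$ vanishing on $\F_\tsPointM$ lies pointwise in $\F^\perp = I + \bundleidealt{\omega^{\tsrankF+1},\ldots,\omega^{\tsrankF+\tsrankG}}$, but every $\ed\tsInvG^b$ already lies in $\G^\perp = I + \bundleidealt{\omega^1,\ldots,\omega^{\tsrankF}}$, since $\tsInvG^b$ is a $\G$-invariant. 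The intersection $\F^\perp \cap \G^\perp$ is exactly $I$, so $\sum_b c_b\,\ed\tsInvG^b \in I$; the Darboux integrability condition from \definitionref{definition:Darboux integrable decomposable system} then forces this combination to be zero, and functional independence of the $\tsInvG^b$ yields $c_b = 0$.

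A dimension count closes the $\DarbouxPr_1$ direction: since $\dim \F_\tsPointM + \dim \vbcompl{\G}_\tsPointM = \tsrankF + (\tsdimMani - \tsrankF) = \tsdimMani$, the transversality gives a direct sum decomposition $T_\tsPointM\tsmaniM = \F_\tsPointM \oplus \vbcompl{\G}_\tsPointM$, and consequently $T_\tsPointM\DarbouxPr_1|_{\F_\tsPointM}$ is a linear isomorphism onto $T_{\DarbouxPr_1(\tsPointM)}\tsBaseF$ for every $\tsPointM$. In particular the image is the full tangent space of $\tsBaseF$, which trivially depends only on $\DarbouxPr_1(\tsPointM)$; so $\F$ projects under $\DarbouxPr_1$ with image $T\tsBaseF$. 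Combined with $T\DarbouxPr_2(\F) = 0$ this gives $\DarbouxPr_{*}\F = T\tsBaseF \times \verz{0}$. The claim for $\G$ follows by exchanging the roles of the two characteristic systems and their associated projections.

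The only step with any real content is the transversality $\F \cap \vbcompl{\G} = 0$, where the Darboux integrability hypothesis enters essentially; everything else is either a dimension count or a standard fact about kernels of submersions.
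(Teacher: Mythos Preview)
Your proof is correct and follows essentially the same skeleton as the paper's: both show $T\DarbouxPr_2(\F)=0$ from $\F\subset\vbcompl{\F}$, then establish that $\F$ is transversal to the fibers so that a dimension count forces the image to be all of $T\tsBaseF$. The difference is only in how the transversality step is obtained. The paper argues first that $\V$ is transversal to the full projection $\DarbouxPr$ (invoking the no-invariants assumption together with $\dim\tsBase=\rank\V$) and then restricts to $\F\subset\V$; you instead prove $\F\cap\vbcompl{\G}=0$ directly by a pointwise linear-algebra argument that explicitly uses the clause $\bundleidealt{\ed\tsInvF^a,\ed\tsInvG^b}\cap I=0$ from \definitionref{definition:Darboux integrable decomposable system}. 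Your route is a little longer but makes transparent exactly where the Darboux-integrability hypothesis enters, whereas the paper's brief appeal to ``no invariants'' leaves that unpacking to the reader.
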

\begin{proof}
The condition in \eqref{equation:condition rank-invariants}
 implies that $\dim \tsBase=\rank \V$. The together
with the fact that $\V$ has no invariants implies that the projection of 
$\V$ is onto $T\tsBase$.
Since $\F$ is contained in $\vbcompl{\F}$ and $\tsBaseG$ is defined locally as the foliation of $\tsmaniM$ 
by the leaves of the completion of $\F$, the projection of 
 $\F$ is contained
in $T\tsBaseF \times \verz{0}$. Since $\F$ has rank $\niG$ and
$\F$ is transversal to the projection $\DarbouxPr$ (since $\V$ is transversal), the
image of $\F$ under $T_m\DarbouxPr$ has rank $\niG$ and must be equal
to $T\tsBaseF \times \verz{0}$.
For $\G$ a similar argument works.
\end{proof}
\blockpar


Since the bundles $\F$ and $\G$ project nicely onto $\tsBase$, we can lift vectors and
vector fields on $\tsBase$ to vectors and vector fields on $\tsmaniM$. Another way of saying this
is that $\V=\F \vbdirsum \G$ provides a connection
for the bundle $\tsmaniM \to \tsBase$.
We have
\begin{align*}
T\tsmaniM &= \vbcompl{\F} \vbdirsum \G = \F \vbdirsum \vbcompl{\G} \\
	&= \F \vbdirsum (\vbcompl{\F} \cap \vbcompl{\G}) \vbdirsum \G \\
	&= \F \vbdirsum \vbZ \vbdirsum \G .
\end{align*}
\begin{acomment}
 The content of the lemma can be expressed with the following commutative diagram
\begin{center}
\begin{tabular}{c}
\xymatrix{
\tsmaniM \ar[d]^{\DarbouxPr} & & \V \ar[r] \ar[d]^{T\DarbouxPr} \ar@{}[dr]|-{\circlearrowleft} & \F \vbdirsum \G \ar[d]^{T\DarbouxPr_1 \times T\DarbouxPr_2} \\
\tsBase & & T\tsBase \ar[r] & T\tsBaseF \times T\tsBaseG \\
}
\end{tabular}
\label{diagram:HEDS:characteristic systems}
\end{center}
\end{acomment}

\subsection{Integral elements and prolongations}
\label{subsection:integral elements}
\label{subsection:prolongation}

\begin{definition}
\label{definition:integral element of decomposable system}
Let $\tsHEDS=(\tsmaniM,\F,\G)$ be a \hdecomposable system.
We define a 2-dimensional linear subspace $\tsIntElem$ of $\V_\tsPointM$ to be
an \defemph{integral element} of $\tsHEDS$ if
$\dim \tsIntElem\cap \F_\tsPointM =\dim \tsIntElem \cap\G_\tsPointM=1$.
\end{definition}
For a \hdecomposable system an integral element in the sense
of \definitionref{definition:integral element of decomposable system} above 
corresponds to the definition of an integral
element in the ordinary sense of the corresponding exterior differential system $\I$ \cite{BCGGG}.
The space of 2-dimensional integral elements of a \hdecomposable system has a very simple structure.
\begin{lemma}
 Let $(\tsmaniM,\F,\G)$ be a \hdecomposable system.
The map
\begin{align*}
\PP \F \vbdirsum \PP \G 
\to \Gr_2(T \tsmaniM): (f,g) \mapsto f+g
\end{align*}
defines an isomorphism from $\PP \F \vbdirsum \PP \G$
to the space of 2-dimensional integral elements of the \hdecomposable system.
\end{lemma}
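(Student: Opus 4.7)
The plan is to verify that the stated map is a fiberwise bijection whose inverse has the obvious geometric description $E \mapsto (E \cap \F, E \cap \G)$, and then note that both directions are smooth because all the operations are linear-algebraic.

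First I would check that the map is well-defined. Fix $\tsPointM \in \tsmaniM$ and pick representatives $0 \neq f \in \F_\tsPointM$ and $0 \neq g \in \G_\tsPointM$. By condition~\ref{enum:empty intersection} of \Definitionref{definition:decomposable EDS:distribution}, $\F_\tsPointM \cap \G_\tsPointM = 0$, so $f$ and $g$ are linearly independent, and the span $f+g$ is a 2-plane in $\V_\tsPointM$. I would then show $(f+g) \cap \F_\tsPointM = \vspan{f}$: any $v = af + bg$ lying in $\F_\tsPointM$ gives $bg = v - af \in \F_\tsPointM$, which together with $g \in \G_\tsPointM$ and $\F_\tsPointM \cap \G_\tsPointM = 0$ forces $b=0$. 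The symmetric argument yields $(f+g) \cap \G_\tsPointM = \vspan{g}$. Hence $f+g$ is a 2-dimensional integral element in the sense of \definitionref{definition:integral element of decomposable system}. The assignment clearly does not depend on the choice of representatives, so it descends to a well-defined map from $\PP \F \vbdirsum \PP \G$.

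Next I would establish injectivity and surjectivity simultaneously by writing down the inverse. Given any integral element $\tsIntElem \subset \V_\tsPointM$, set $f := \tsIntElem \cap \F_\tsPointM$ and $g := \tsIntElem \cap \G_\tsPointM$; these are lines by the definition of integral element. Since $f \subset \F_\tsPointM$ and $g \subset \G_\tsPointM$ intersect trivially, $f + g$ is a 2-plane contained in $\tsIntElem$, and by dimension $f + g = \tsIntElem$. This proves surjectivity. For injectivity, if $f_1 + g_1 = f_2 + g_2$ as 2-planes, intersecting with $\F_\tsPointM$ recovers $f_i$ from the sum (by the computation above), so $f_1 = f_2$, and analogously $g_1 = g_2$.

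Finally, I would address smoothness. The forward map is algebraic in local frames of $\F$ and $\G$, hence smooth. For the inverse, smoothness of $\tsIntElem \mapsto \tsIntElem \cap \F_\tsPointM$ on the open subset of $\Gr_2(T\tsmaniM)$ where the intersection has constant dimension $1$ is a standard transversality fact, valid here because $\F_\tsPointM$ and $\G_\tsPointM$ are complementary inside $\V_\tsPointM$. I do not expect any genuine obstacle; the only thing to keep straight is the use of condition~\ref{enum:empty intersection} at every step where one decomposes a vector in $\V_\tsPointM$ uniquely as a sum of an element of $\F_\tsPointM$ and an element of $\G_\tsPointM$.
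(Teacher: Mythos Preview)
Your argument is correct. The paper states this lemma without proof, treating it as an immediate consequence of the definitions; your write-up supplies exactly the natural details (well-definedness via $\F_\tsPointM \cap \G_\tsPointM = 0$, the explicit inverse $E \mapsto (E\cap\F_\tsPointM,\,E\cap\G_\tsPointM)$, and smoothness from constant-rank intersections), so there is nothing to compare.
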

\begin{acomment}
Note that in the definition of integral elements of
a \hdecomposable system we have excluded
the ``degenerate'' integral elements. In particular
the integral elements of $\V$ are not all integral elements of the \hdecomposable system. These degenerate elements are
automatically excluded in the exterior differential system formulation.
\end{acomment}

For our \hdecomposable systems we can define prolongations in a similar
way to \cite[\mathpara 1.3]{Bryant1995-I}.
Recall that a \hdecomposable system is of class $\tsHEDSclass{\tsdimFiber}{\tsrankF}{\tsrankG}$
if $\dim \tsmaniM =\tsdimFiber+\tsrankF+\tsrankG$, $\rank \F=\tsrankF$ and $\rank \G=\tsrankG$.
 The prolongation of a system of
 class $\tsHEDSclass{\tsdimFiber}{\tsrankF}{\tsrankG}$
is a \hdecomposable system of class $\tsHEDSclass{\tsdimMani+(\tsrankF-1)+(\tsrankG-1)}{\tsrankF}{\tsrankG}$.
For $\tsrankF=\tsrankG=2$ this corresponds to
\cite[Proposition, \cpage 53]{Bryant1995-I}.

\begin{acomment}
 \begin{remark}
  To prolong a \hdecomposable exterior differential system introduce
coordinates $c_\sumAe, d_\sumBe$ for the space of integral elements by
\begin{align*}
 \theta^\sumTe &= 0, \\
 \Theta^\sumAe &= \omega^\sumAe  - c_\sumAe \omega^1 = 0, \\
 \Theta^{\sumBe} &= \omega^\sumBe  - d_\sumBe \omega^1 = 0 .
\end{align*}
Then the prolonged exterior differential system on
$\tsmaniM \times \R^{\tsrankF -1 + \tsrankG -1}$ is given by
\begin{align*}
 &\theta^t, \quad 1 \leq t \leq \tsdimFiber, \\
 &\Theta^\sumAe, \quad 2 \leq \sumAe \leq \tsrankF, \\
 &\Theta^\sumBe, \quad \tsrankF+2 \leq \sumAe \leq \tsrankF+\tsrankG .
\end{align*}
together with the 2-forms formed by the pairs
\begin{align*}
&\omega^1, \  \Omega^\sumAe =  \ed c_\sumAe,  \\
&\text{and } \  \omega^{\tsrankF+1} , \  \Omega^\sumBe = \ed d_\sumBe . 
\end{align*}
Done!
 \end{remark}
\end{acomment}

\subsection{Lifting solutions}
\label{subsection:lifting solutions}

We define an integral manifold (a ``solution'') of a \hdecomposable system
$(\tsmaniM,\F,\G)$
to be a 2-dimensional submanifold $\tsS$ of $\tsmaniM$ such that
for all points $\tsPointS\in \tsS$ the tangent space $T_\tsPointS \tsS$ is an 
integral element of the system.
The integral manifolds of a \hdecomposable system are precisely the 2-dimensional integral manifolds of the bundle $\V=\F \vbdirsum \G$
that satisfy the non-degeneracy condition that at each point the tangent space of the integral manifold intersected with both $\F$ and $\G$ is non-empty.
In this section we will show that for a Darboux integrable \hdecomposable system
the integral manifolds can be found by solving \explemph{ordinary differential equations}.
In contrast, for a general \hdecomposable system the integral
manifolds are the solutions of a system of \explemph{partial differential equations}.
In the case $\tsrankF=\tsrankG=2$ this system of partial differential equations is \hdecomposable (in the sense of partial differential equations), hence the name \explemph{\hdecomposable} systems.

Assume $(\tsmaniM,\F,\G)$ is a Darboux integrable \hdecomposable system
satisfying~\eqref{decomposable EDS:assumptions}.
Let $\DarbouxPr : \tsmaniM \to \tsBase$ be the Darboux projection of the system.
We can parameterize the integral manifolds of this system in the following way.
Select a curve $\gamma_1$ in $\tsBaseF$ and a curve $\gamma_2$ in $\tsBaseG$. The product
of these two curves is a surface $\tsSB$ in $B$. Let $\tsS=\DarbouxPr^{-1}(\tsSB)$ be
the inverse image of $\tsSB$. The distribution $\V$ restricts
on the inverse image $\tsS$ to an integrable distribution $\W$ of rank $2$.
\begin{acomment}
\afootnote{Include this additional explanation?}
To show that $\W$ is an integrable distribution note that
for $X,Y \VinD \W$ we have $[X,Y] \subset T\tsS$.
From the definition of $\tsS$ and \lemmaref{lemma:F and G project} it follows that
$\rank T\tsS \cap \F=\rank T\tsS \cap \G=1$.
Then from $[\F,\G] \equiv 0 \mod \V$ it follows that
for $X,Y \VinD \W$ we have $[X,Y] \VinD \V$.
Combining this we see that for $[X,Y] \VinD \W$
we have $[X,Y] \subset \V \cap T\tsS = \W$.
\end{acomment}
 The leaves of this distribution are integral manifolds of
the \hdecomposable system. 
Since $\W$ is integrable, finding the integral manifolds can be done
by solving ordinary differential equations. The integral manifolds
obtained in this way depend on two arbitrary functions of
$\tsrankF-1$ and $\tsrankG-1$ variables (to determine
the curves $\gamma_1$ and $\gamma_2$) and $\tsdimFiber$ integration constants.
It is not difficult to show that locally every integral manifold
of the \hdecomposable system is given in this way as the \explemph{lift} of
two curves $\gamma_1, \gamma_2$.

\subsection{Examples}
\label{subsection:examples}
\label{subsection:other systems}

Hyperbolic exterior differential systems are discussed in detail
in~\cite{Bryant1995-I, Bryant1995-II}. It is shown that the equation manifolds with the
contact structure of 
\MongeAmpere equations, first order systems and second order scalar equations 
can all be realized as 
\hdecomposable exterior differential systems of class $s=1$, 2 and 3, respectively,
see~\cite[Example 5, 6, 7]{Bryant1995-I}.
The solutions of the equations correspond to the integral manifolds of the \hdecomposable
exterior differential systems.
All Darboux integrable equations in the classes mentioned above provide examples  of Darboux integrable \hdecomposable systems.

Vassiliou defines in~\cite[Definition 2.10]{Vassiliou2001} the concept
of a manifold of $(p,q)$-\hdecomposable type.
In terms of our \definitionref{definition:decomposable EDS:distribution}
a manifold of $(p,q)$-\hdecomposable type
is a \hdecomposable exterior differential system without invariants, but with
the additional conditions that $\F$ and $\G$ are non-integrable and
\begin{align}
 \rank \F=\rank \G=2, \ \dim \tsmaniM \geq 6, \ \niF=p, \ \niG=q.
\label{equation:assumptions manifold pq-type}
\end{align}
Condition 2b) in the definition of Vassiliou corresponds to the assumption of no invariants.
Under the additional assumptions~\eqref{equation:assumptions manifold pq-type} one
can easily show that condition 2c) corresponds to condition \ref{enum:condition closed}.
The final condition 2d) defines the number of invariants of the
distributions $\F$ and $\G$. The main case in
the work of Vassiliou are the manifolds of $(2,2)$-\hdecomposable type
and these correspond to our definition of a Darboux integrable \hdecomposable system.

\begin{example}
Consider the equation manifold associated to the Liouville equation $z_{xy}=\exp(z)$.
For the equation manifold in the second order jet bundle we use the coordinates $x$, $y$, $z$, $p=z_x$, $q=z_y$, $r=z_{xx}$, $t=z_{yy}$. 
On this manifold $\tsmaniM$ we have two natural distributions defined by the characteristic
systems. In coordinates $x,y,z,p,q,r,t$ we have
\begin{align*}
\F &= \vbspand{ \p_x + p\p_z + r\p_p + \exp(z)\p_q + q\exp(z)\p_t, \p_r} , \\
\G &= \vbspand{ \p_x + q\p_z + \exp(z) \p_p + t\p_q + p\exp(z)\p_r, \p_t} .
\end{align*}
The contact structure on the equation manifold is given by
$\V=\F \vbdirsum \G$.
The bundle $\F$ has two invariants $y, t-q^2/2$ and the bundle $\G$ has
two invariants $x, r-p^2/2$. The triple $(\tsmaniM,\F,\G)$ is a Darboux integrable
\hdecomposable system.
Integration of this system leads to the general solution of the Liouville equation
\begin{align*}
 z(x,y) = \ln\left( \frac{2\phi'(x)\psi'(y)}{(\phi(x)+\psi(y))^2} \right) .
\end{align*}
Here $\phi, \psi$ are 2 arbitrary functions.
\end{example}

Finally we note that for many elliptic systems of partial differential equations
we can define the equivalent notion of an ``elliptic exterior differential system''. For these systems we can also define the Darboux integrability
property and construct solutions using ordinary integration. 
See~\cite[\mathpara 8.1.4]{Eendebak2006} for more details.

\begin{acomment}
 
Note that to define and use elliptic Darboux integrability we only
need to complexify the tangent space and do not need to complexify the
manifold $M$. Complexifying the entire manifold $M$ is
a usefull technique used frequently in the 19th century, but this
requires $M$ and $\V$ to be analytic.
\end{acomment}


\section{Lie algebras of tangential symmetries}
\label{section:Lie algebras of tangential symmetries}

In this section we will define and construct the
tangential symmetries of Darboux integrable systems.
The constructions here are \explemph{local} and work in the smooth
setting. We will assume all \hdecomposable systems have no invariants.
We start in \sectionref{subsection:reciprocal Lie algebras} with some results on
reciprocal Lie algebras necessary later on.

\begin{definition}
\label{definition:tangential symmetries}
Let $(\tsmaniM,\F,\G)$ be a Darboux integrable \hdecomposable system
with Darboux projection $\DarbouxPr: \tsmaniM \to \tsBaseF \times \tsBaseG$.
We define the \defemph{tangential symmetries}
\index{tangential symmetry} of $\F$ and $\G$
as the space of all vector fields in $\tsTangentV$ that are symmetries of the
distributions $\F$ and $\G$, respectively.
We write $\tsTSf$ and $\tsTSg$ for the tangential
symmetries of $\G$ and $\F$, respectively.
The vector fields in $\tsTangentV$ that are symmetries of both $\F$ and $\G$
are called \defemph{tangential symmetries} of the \hdecomposable system.
\end{definition}
\blockpar
The name tangential characteristic symmetries was
introduced by Vassiliou~\cite{Vassiliou2001}.
The main results of this paper are the two theorems below.
They are both proved by the constructions in 
\sectionref{subsection:geometric construction}.

\begin{theorem}
\label{theorem:tangential symmetries}
Let $(\tsmaniM,\F,\G)$ be a Darboux integrable \hdecomposable system
with projection $\DarbouxPr: \tsmaniM \to \tsBaseF \times \tsBaseG$.
The space of tangential symmetries of $\F$ ($\G$) is a
\finitedimensional module over $\DarbouxPr_1^*(C^\infty(\tsBaseF))$ (over $\DarbouxPr_1^*(C^\infty(\tsBaseG))$) of dimension $\tsdimFiber$.
\end{theorem}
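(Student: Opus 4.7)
The plan is to reduce the tangential-symmetry condition to a linear first-order PDE system on the fiber components of $X$, propagate solutions along $\F$ on each leaf of $\vbcompl{\F}$, and then assemble the leafwise spaces into a free module over the invariants of $\F$.

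First I would work in adapted local coordinates $(x^i,y^\alpha,z^t)$ with $(x^i)$ pulled back from $\tsBaseF$, $(y^\alpha)$ pulled back from $\tsBaseG$, and $(z^t)$ parameterizing the fibers of $\DarbouxPr$. Using \lemmaref{lemma:F and G project} I may pick a frame $F_i = \p_{x^i} + A^t_i(x,y,z)\p_{z^t}$ for $\F$, so that $\vbZ = \vbspant{\p_{z^t}}$ and an arbitrary tangential vector field takes the form $X = X^t(x,y,z)\p_{z^t}$. A direct computation shows that $[X,F_i]$ lies in $\vbZ$, so the requirement $[X,F_i]\in\F$---combined with $\F\cap\vbZ=\{0\}$---is equivalent to $[X,F_i]=0$; unpacked, this is the linear system
\[
F_i(X^u) = X^t\,\p_{z^t}A^u_i, \qquad i=1,\ldots,\tsrankF,\; u=1,\ldots,\tsdimFiber.
\]

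Compatibility of this overdetermined system is essentially free: the Jacobi identity yields $[X,[F_i,F_j]] = [[X,F_i],F_j]+[F_i,[X,F_j]] = 0$ as soon as $[X,F_i]=[X,F_j]=0$, so propagation of an initial vector $v_0\in\vbZ_{\tsPointM_0}$ along $\F$-flows from a basepoint $\tsPointM_0$ is path-independent. Since $\F$ is bracket-generating inside its completion, its orbits coincide with the leaves of $\vbcompl{\F}$, and $X$ is determined on the entire leaf through $\tsPointM_0$ by its value at that point. Evaluation at $\tsPointM_0$ therefore identifies the tangential symmetries of $\F$ on a single leaf with $\vbZ_{\tsPointM_0}$, a vector space of dimension $\tsdimFiber$.

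Finally, the leaves of $\vbcompl{\F}$ are parametrized by $\tsBaseG$, and multiplication by any invariant of $\F$---that is, any function pulled back from the appropriate Darboux base---preserves both the containment $X\subset\vbZ$ and the identity $[X,F_i]=0$, whereas scaling by a function with nontrivial $\F$-derivative instantly destroys symmetryhood. Hence the tangential symmetries of $\F$ form a free module of rank $\tsdimFiber$ over the invariants of $\F$; the case of $\G$ is entirely symmetric. The main obstacle I anticipate is verifying that the leafwise fundamental solutions can be chosen to depend smoothly on the leaf parameter, so that the assembled $X$ is a genuine smooth vector field on $\tsmaniM$: this requires the Frobenius-type propagation argument to be carried out uniformly in $y$, invoking the closure condition $[\F,\G]\equiv 0 \mod \F\vbdirsum\G$ and the standing assumptions in \eqref{decomposable EDS:assumptions}.
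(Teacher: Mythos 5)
Your reduction of the tangential-symmetry condition to the linear system $[X,F_i]=0$ (using $[X,F_i]\VinD\vbZ$ together with $\F\cap\vbZ=0$) is correct, and your closing observation that multiplication by invariants of $\F$ preserves the symmetry condition recovers the module structure in essentially the same way as the paper. The gap is in the existence step, where you assert that ``compatibility of this overdetermined system is essentially free'' by the Jacobi identity. The identity $[X,[F_i,F_j]]=[[X,F_i],F_j]+[F_i,[X,F_j]]$ only shows that a solution of $[X,F_i]=0$ must \emph{in addition} commute with all iterated brackets of the $F_i$; since those brackets span the fiber directions, this adds further necessary conditions rather than establishing integrability. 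Your system prescribes the derivatives of $X$ only along $\F$, while propagation along $\F$-flows reaches the whole leaf of $\vbcompl{\F}$, fiber directions included; the path-independence of that propagation (equivalently, triviality of the holonomy of $\F$-flow loops acting on $\vbZ_{\tsPointM_0}$) is precisely the nontrivial content of the theorem and does not follow from the bracket relations of $\F$ alone. If one drops the second characteristic system, a generic bracket-generating distribution with the analogous system $F_i(X^u)=X^t\p_{z^t}A_i^u$ admits no \nonzero solutions at all, so some input beyond $\F$ is indispensable.

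That input is $\G$. The paper lifts commuting vector fields from $\tsBaseG$ to fields $G_j\VinD\G$ satisfying $[F_i,G_j]=0$; the derived algebra $\tsg'$ then consists of tangential vector fields commuting with every $F_i$, hence of tangential symmetries of $\F$, and it spans $\vbZ$ pointwise because $\G$ has exactly $\rank\F$ functionally independent invariants. This explicit construction is what your propagation argument silently presupposes: the holonomy of the $\F$-flows on a fiber commutes with the transitive fiber action of $\tsg'$ and fixes a point, hence is trivial, but one must build $\tsg'$ first. Note also that you locate the residual difficulty in the smooth dependence of the leafwise solutions on the leaf parameter; the genuine difficulty is the leafwise existence itself, and your appeal to the closure condition $[\F,\G]\equiv 0 \mod \F\vbdirsum\G$ at that late stage is too weak a use of $\G$ to supply it.
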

\blockpar

\begin{theorem}[Main theorem]
\label{theorem:main theorem}
Let $(\tsmaniM,\F,\G)$ be a Darboux integrable \hdecomposable system
with projection $\DarbouxPr: \tsmaniM \to \tsBaseF \times \tsBaseG$.

There exist finite-dimensional Lie algebras of vector field $\tsTLF$, $\tsTLG$ on $\tsmaniM$ tangential to the projection $\pi$
such that $\tsTLF$ and $\tsTLG$ commute and the elements in $\tsTLF$ and $\tsTLG$ are tangential symmetries of $\G$ and
$\F$, respectively. 
\end{theorem}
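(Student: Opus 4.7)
The plan is to realize the fibers of the Darboux projection $\DarbouxPr : \tsmaniM \to \tsBase$ as open neighbourhoods of the identity in a common Lie group $H$ of dimension $\tsdimFiber$, with $\tsTLF$ and $\tsTLG$ appearing as the right- and left-invariant vector fields on $H$ transferred to $\tsmaniM$ via this identification. Commutativity of $\tsTLF$ and $\tsTLG$ will then follow from the classical commutativity of left and right translations on a Lie group.

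By Theorem~\ref{theorem:tangential symmetries}, $\tsTSf$ is a free module of rank $\tsdimFiber$ over the ring of $\G$-invariants $\DarbouxPr_1^*(C^\infty(\tsBaseF))$; fix a local module basis $Z_1,\ldots,Z_\tsdimFiber$. A direct check using Jacobi shows that $\tsTSf$ is closed under the Lie bracket of vector fields, so
\[
[Z_\alpha,Z_\beta] = c_{\alpha\beta}^\gamma Z_\gamma, \qquad c_{\alpha\beta}^\gamma \in \DarbouxPr_1^*(C^\infty(\tsBaseF)).
\]
Fix $b_1^0 \in \tsBaseF$ and let $L_1 = \DarbouxPr_1^{-1}(b_1^0)$. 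Since $\vbZ \subset \vbcompl{\G} = TL_1$, each $Z_\alpha$ is tangent to $L_1$, and on $L_1$ the coefficient ring evaluates to $\R$. The restrictions $Z_\alpha|_{L_1}$ therefore span an $\tsdimFiber$-dimensional Lie subalgebra $\tsCenterF{b_1^0} \subset \vvM{L_1}$ with the \emph{constant} structure constants $c_{\alpha\beta}^\gamma(b_1^0)$. The mirror construction on $L_2 = \DarbouxPr_2^{-1}(b_2^0)$ produces a $\tsdimFiber$-dimensional Lie subalgebra $\tsCenterF{b_2^0} \subset \vvM{L_2}$ out of $\tsTSg$.

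To produce $\tsTLF \subset \tsTSf$ and $\tsTLG \subset \tsTSg$ globally on $\tsmaniM$, I adjust the module bases so that the coefficient functions $c_{\alpha\beta}^\gamma$ are actually constant on $\tsBaseF$; equivalently, I construct a flat trivialisation of $\tsTSf$ as a $\DarbouxPr_1^*(C^\infty(\tsBaseF))$-module. With such a trivialisation in hand, the $\R$-linear span of $Z_1,\ldots,Z_\tsdimFiber$ is a $\tsdimFiber$-dimensional Lie subalgebra of $\tsTSf$, which I take to be $\tsTLF$; the Lie algebra $\tsTLG$ is built analogously. For commutativity, normalise initial data on $F_{b^0} = L_1 \cap L_2$: arrange that $\tsTLF|_{F_{b^0}}$ and $\tsTLG|_{F_{b^0}}$ are the right- and left-invariant vector fields of the local Lie group whose Lie algebra has structure constants $c_{\alpha\beta}^\gamma(b^0)$. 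These commute on $F_{b^0}$, and applying the same extension procedure to $[Z,W]$ for $Z\in\tsTLF$, $W\in\tsTLG$ shows it satisfies the trivial extension problem; uniqueness then forces $[Z,W]=0$ on $\tsmaniM$.

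The main obstacle is the flat-trivialisation step: writing a change of basis $\tilde Z_\alpha = A^\beta_\alpha Z_\beta$ with $A$ a $\DarbouxPr_1^*(C^\infty(\tsBaseF))$-valued matrix, the requirement that the new structure functions be constant translates into a system of first-order PDEs on $A$ over $\tsBaseF$. Verifying integrability of this system is where the closure relation $[\F,\G]\subset\V$ of Definition~\ref{definition:decomposable EDS:distribution} and the rank assumptions~\eqref{decomposable EDS:assumptions} behind Theorem~\ref{theorem:tangential symmetries} are used essentially. The remaining ingredients --- bracket closure of $\tsTSf$, integration of the resulting Lie algebra to a local Lie group, and commutativity of left and right translations --- are routine.
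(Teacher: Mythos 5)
Your skeleton matches the paper's: get the module structure of $\tsTSf$ over $\DarbouxPr_1^*(C^\infty(\tsBaseF))$ from \theoremref{theorem:tangential symmetries}, observe that the structure functions $c_{\alpha\beta}^\gamma$ live in that ring, and then look for a $\GL(\tsdimFiber,\R)$-valued change of basis depending only on $\tsBaseF$ that makes them constant. But the step you yourself flag as ``the main obstacle'' is exactly the heart of the proof, and you leave it unverified --- and your framing of it as the integrability of a first-order PDE system for $A$ is off. Since the basis fields $Z_\alpha$ are tangential to the fibers of $\DarbouxPr$, they annihilate every function pulled back from $\tsBaseF$; hence in $[\tilde Z_\alpha,\tilde Z_\beta]$ with $\tilde Z_\alpha = A_\alpha^\beta Z_\beta$ no derivatives of $A$ appear, and the requirement that the new structure functions be constant is the \emph{pointwise algebraic} condition that $A(b_1)$ carry the structure tensor $c(b_1)$ onto a fixed reference tensor $c(b_1^0)$. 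For such an $A(b_1)$ to exist at each point one must first know that the Lie algebras $(\R^{\tsdimFiber}, c(b_1))$ are mutually isomorphic as $b_1$ ranges over $\tsBaseF$, i.e.\ that $b_1 \mapsto c(b_1)$ stays in a single $\GL(\tsdimFiber,\R)$-orbit. Nothing in your argument delivers this, and it is not automatic. The paper obtains it from the reciprocal-Lie-algebra structure on the fibers: the type of $\tsfdF{\tsBaseP}$ is constant along leaves of $\vbcompl{\G}$ because its structure functions are $\G$-invariant, while along leaves of $\vbcompl{\F}$ it is pinned down by being anti-isomorphic to $\tsgdF{\tsBaseP}$, whose structure functions are $\F$-invariant; together these force the type to be globally constant. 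You would need this (or an equivalent) before any ``flat trivialisation'' can exist. Once constancy of type is known, the remaining issue is only the \emph{smoothness} of the choice $b_1 \mapsto A(b_1)$, which the paper settles by taking a local section of the principal bundle $\GL(\tsdimFiber,\R)\to\GL(\tsdimFiber,\R)/G_a$ over the orbit --- not by Frobenius.

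Two smaller remarks. First, your commutativity argument via normalising initial data on a single fiber and invoking a uniqueness-of-extension principle is both unproved and unnecessary: once $\tsTLF\subset\tsTSf$ and $\tsTLG\subset\tsTSg$, commutativity is immediate, because by \lemmaref{lemma:tangential symmetries:x1} elements of $\tsTSf$ are $\DarbouxPr_1^*(C^\infty(\tsBaseF))$-combinations of the commuting algebra $\tsf'$, elements of $\tsTSg$ are $\DarbouxPr_2^*(C^\infty(\tsBaseG))$-combinations of $\tsg'$, $[\tsf',\tsg']=0$, and tangential fields annihilate the pulled-back coefficients. Second, the bracket-closure of $\tsTSf$ that you assert ``by a direct check using Jacobi'' is fine, but the paper gets the whole module description constructively from the lifted commuting frames $F_i$, $G_j$ and the derived algebras $\tsf'$, $\tsg'$, which is also what makes the fiberwise reciprocal pair available for the constancy-of-type argument you are missing.
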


\subsection{Reciprocal Lie algebras}
\label{subsection:reciprocal Lie algebras}

Let $\tsmaniM$ be a smooth manifold. We denote the space of smooth vector fields on $\tsmaniM$
by $\vvM{\tsmaniM}$.
Let $\lie{g}$ be an $n$-dimensional Lie algebra and let
$\alpha: \lie{g} \to \vvM{\tsmaniM}$ be a representation of 
$\lie{g}$ in the space of vector fields on $\tsmaniM$.
In the theory to be developed below we will work locally and $\tsmaniM$ will often be of the
same dimension as $\lie{g}$, so we can think of $\tsmaniM$ as an open subset
of $\R^n$.
We say the representation is \defemph{locally transitive} if
$\alpha(\lie{g})$ locally generates as a $C^\infty(\tsmaniM)$-module
the space of vector fields $\vvM{\tsmaniM}$.
If $\dim \tsmaniM=\dim \lie{g}=n$, then a transitive representation defines
an injective map $\lie{g} \to \vvM{\tsmaniM}$ and we can 
identify $\lie{g}$ with its representation as vector fields on $\tsmaniM$.

A Lie algebra of vector fields on $\tsmaniM$ is a Lie subalgebra of $\vvM{\tsmaniM}$.
For any Lie algebra $\lie{g}$ of vector fields on $\tsmaniM$ and a point $\tsPointM$ in $\tsmaniM$
we can define the \defemph{evaluation map}
\begin{align}
\tsev{\lie{g}}{\tsPointM} &: \lie{g} \to T_\tsPointM \tsmaniM: X \mapsto X(\tsPointM) .
\label{equation:tangential symmetries:definition:evaluation map}
\end{align}
A Lie algebra $\lie{g}$ of vector fields on $\tsmaniM$ is \defemph{locally transitive}
at $\tsPointM \in \tsmaniM$
if and only if the
evaluation map $\tsev{\lie{g}}{\tsPointM}$ is a linear
isomorphism from $\lie{g}$ onto $T_\tsPointM \tsmaniM$.
From here on we will identify a Lie algebra with its representation as vector fields whenever
the representation is injective.


\begin{acomment}
We define the \defemph{centralizer}\index{centralizer} of $\lie{g}$ to
be the Lie algebra of vector fields that commute with $\lie{g}$.
\end{acomment}

The following theorem and lemma are both proved in~\cite[\cpages 242--243]{Eendebak2006}.
\begin{theorem}
\label{theorem:reciprocal Lie algebras}
Let $\tsmaniM$ be a smooth $n$-dimensional manifold. Let $\lie{g}$ be
an $n$-dimensional locally transitive Lie subalgebra of $\vvM{\tsmaniM}$.
Then the centralizer $\lie{h}$ of $\lie{g}$
is an $n$-dimensional locally transitive Lie algebra of vector fields.
The Lie 
algebra $\lie{g}$ is anti-isomorphic with
$\lie{h}$ in the sense that for every point $x\in M$
the invertible linear map $\alpha_x=(\tsev{\lie{h}}{x})^{-1} \composition \tsev{\lie{g}}{x}$ is
a Lie algebra anti-homomorphism.
In particular for all vector fields $X,Y \in \lie{g}$ we have
$\alpha_x([X,Y]) = - [\alpha_x(X),\alpha_x(Y)]$.
\end{theorem}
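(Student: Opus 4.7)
The plan is to realize $\tsmaniM$ locally as a Lie group $G$ with Lie algebra $\lie{g}$ acting on itself by (say) left translations; then $\lie{h}$ is precisely the algebra of right-invariant vector fields, which classically commutes with the left-invariant ones and carries the opposite Lie bracket. The dimension hypothesis is essential: local transitivity combined with $\dim \lie{g} = \dim \tsmaniM$ makes the evaluation map $\tsev{\lie{g}}{x}$ a linear isomorphism at every nearby point, so the infinitesimal $\lie{g}$-action has trivial isotropy. By Lie's third theorem (or by direct integration of the flows of $\lie{g}$ and checking consistency via the Baker-Campbell-Hausdorff formula), one obtains a local diffeomorphism from a neighborhood of $e \in G$ onto a neighborhood of a chosen point $x_0 \in \tsmaniM$ intertwining the two $\lie{g}$-actions.

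Working directly with the vector fields, I would fix a basis $X_1, \ldots, X_n$ of $\lie{g}$ and define candidate generators $Y_1, \ldots, Y_n$ of $\lie{h}$ by the two conditions $Y_i(x_0) = X_i(x_0)$ and $(\phi^t_X)_* Y_i = Y_i$ for every $X \in \lie{g}$, where $\phi^t_X$ denotes the local flow of $X$. This determines $Y_i$ on the $\lie{g}$-orbit of $x_0$; well-definedness, i.e.\ independence of the chain of flows used to reach a given point, is exactly what the local Lie group model above guarantees. The pushforward invariance yields $[X, Y_i] = 0$ for every $X \in \lie{g}$, and the $Y_i(x)$ remain linearly independent at every $x$ because each $(\phi^t_X)_*$ is a linear isomorphism on tangent spaces. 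That $\lie{h} := \mathrm{span}_{\mathbb{R}}(Y_1, \ldots, Y_n)$ is closed under Lie bracket follows from Jacobi: $[Y_i, Y_j]$ also commutes with every $X_k$, and since $\tsev{\lie{h}}{x_0}$ is a linear isomorphism, $[Y_i, Y_j]$ is determined by its value at $x_0$ and hence lies in $\lie{h}$. The same pushforward-invariance argument shows that any vector field centralizing $\lie{g}$ is determined by its value at $x_0$, so $\lie{h}$ exhausts the full centralizer and has dimension exactly $n$.

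It remains to verify the anti-homomorphism identity. Since $\alpha_{x_0}(X_i) = Y_i$ by the normalization $Y_i(x_0) = X_i(x_0)$, at the base point this reduces to the single identity $[Y_i, Y_j](x_0) = -[X_i, X_j](x_0)$, which in the Lie group picture is the classical relation between right- and left-invariant vector fields at the identity. I would then transport the identity to an arbitrary $x$ by using that each $\phi^t_X$ preserves both $\lie{g}$ and $\lie{h}$ setwise and therefore intertwines $\alpha_x$ with $\alpha_{x_0}$. The main obstacle is producing the sign intrinsically; without the Lie group model one has to extract it from a somewhat delicate application of Jacobi to $[X_i, [X_j, Y_k]] + [X_j, [Y_k, X_i]] + [Y_k, [X_i, X_j]] = 0$ and compare to the analogous expression with two $Y$'s. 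For this reason I would prefer to lean on the Lie group picture, where the opposite bracket appears transparently, rather than pursue a fully coordinate-free direct calculation.
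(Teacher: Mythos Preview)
The paper does not actually prove this theorem in the text: immediately before the statement it says ``The following theorem and lemma are both proved in~[Eendebak2006, pp.~242--243]'' and gives no argument here. So there is no in-paper proof to compare your proposal against.

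That said, your approach is correct and is the classical one. Local transitivity together with $\dim\lie{g}=\dim M$ forces the evaluation maps to be isomorphisms, hence trivial isotropy, and then $M$ is locally the Lie group $G$ integrating $\lie{g}$, with $\lie{g}$ acting as left-invariant vector fields. The centralizer $\lie{h}$ is then exactly the right-invariant fields, and the anti-isomorphism is the standard relation between left- and right-invariant brackets at the identity. Your flow-based construction of the $Y_i$ is precisely right translation of the initial vectors in the group picture, and your transport argument for moving the identity from $x_0$ to a general $x$ goes through: the flow $\phi^t_X$ of $X\in\lie{g}$ acts on $\lie{g}$ by a Lie algebra automorphism $A_\phi$ and on $\lie{h}$ as the identity, so one checks $\alpha_x=\alpha_{x_0}\circ A_\phi^{-1}$, which is again an anti-homomorphism. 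The one step you defer to the group model, namely well-definedness of the $Y_i$ (independence of the chain of flows), is a legitimate deferral; proving it directly amounts to reproving Lie's third theorem in this setting. The paper itself cites Duistermaat--Kolk for the left/right-invariant picture on Lie groups, which is consistent with the route you outline.
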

We call a pair of commutative, locally transitive Lie algebras
a pair of \defemph{reciprocal Lie algebras}.
For a pair of reciprocal Lie algebras $\lie{g}$, $\lie{h}$ the center
of $\lie{g}$ is equal to the center of $\lie{h}$.

\begin{example}[Reciprocal Lie algebra]
\index{reciprocal Lie algebra!of affine Lie algebra}
Consider the affine Lie algebra $\affine(1)$, represented by the two vector fields
\begin{align*}
e_1 &= \p_{x^1} - x^2 \p_{x^2}, \quad e_2 = \p_{x^2} .
\end{align*}
Then the reciprocal Lie algebra $\lie{h}$ is generated by 
\begin{align*}
f_1 &= \p_{x^1} , \quad f_2 = \exp(-x^1) \p_{x^2} .
\end{align*}
Note that $[e_1,e_2]=e_2$ and $[f_1,f_2]=-f_2$ so the structure constants for both Lie algebras
are related by a minus sign.
The Lie algebras $\affine(1)$ and $\lie{h}$ commute in the sense that
$[e_i, f_j]=0$, $1 \leq i,j \leq 2$.
\end{example}

We will end this section with a lemma on reciprocal Lie algebras.
We will use this lemma in \sectionref{subsection:geometric construction}.
\begin{lemma}
\label{lemma:reciprocal Lie algebras:surjective implies injective}
Let $\tsmaniM$ be a smooth connected manifold with Lie algebras
of vector fields $\lie{g}$, $\lie{h}$.
We assume that i) $\lie{g}$ and $\lie{h}$ commute, \ie for all $X\in \lie{g}$ and $Y\in \lie{h}$ we have\
$[X,Y]=0$
and ii) for all $\tsPointMx\in \tsmaniM$ the evaluation
maps $\tsev{\lie{g}}{\tsPointMx}: \lie{g} \to T_\tsPointMx \tsmaniM$ and
$\tsev{\lie{h}}{x}: \lie{h} \to T_\tsPointMx \tsmaniM$ are surjective.
Then for all $\tsPointMx\in \tsmaniM$ the maps $\tsev{\lie{g}}{\tsPointMx}$
and $\tsev{\lie{h}}{\tsPointMx}$ are injective and hence
$\lie{g}$, $\lie{h}$ are reciprocal Lie algebras of
dimension equal to the dimension of $\tsmaniM$.
\end{lemma}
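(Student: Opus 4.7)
The plan is to use the standard fact that commuting vector fields have flows that preserve each other, and to bootstrap this via local transitivity of $\lie{h}$ to spread zeros of an element of $\lie{g}$ over an open set, then invoke connectedness.

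Concretely, I would first prove injectivity of $\tsev{\lie{g}}{x_0}$ at an arbitrary $x_0 \in \tsmaniM$. Take $X \in \lie{g}$ with $X_{x_0} = 0$; I want $X = 0$ identically. For any $Y \in \lie{h}$ we have $[X,Y]=0$, so the (local) flow $\phi_t^Y$ satisfies $(\phi_t^Y)_* X = X$, equivalently
\[
X_{\phi_t^Y(p)} = (T_p \phi_t^Y)(X_p)
\]
wherever both sides are defined. In particular $X_{x_0}=0$ forces $X$ to vanish along every integral curve of every $Y \in \lie{h}$ through $x_0$. Now choose $Y_1,\ldots,Y_n \in \lie{h}$ whose values at $x_0$ form a basis of $T_{x_0}\tsmaniM$, possible by surjectivity of $\tsev{\lie{h}}{x_0}$. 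The composite flow map
\[
\Phi(t_1,\ldots,t_n) = \phi_{t_1}^{Y_1} \composition \cdots \composition \phi_{t_n}^{Y_n}(x_0)
\]
has differential at $0$ equal to $(Y_1(x_0),\ldots,Y_n(x_0))$, hence is a local diffeomorphism onto a neighborhood $U$ of $x_0$. Iterating the flow-preservation observation along each coordinate shows that $X$ vanishes on $U$.

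Thus the zero set $Z(X) = \{x \in \tsmaniM : X_x = 0\}$ is open; being closed by continuity and non-empty by assumption, connectedness of $\tsmaniM$ gives $Z(X) = \tsmaniM$, so $X$ is the zero vector field. This proves $\tsev{\lie{g}}{x_0}$ is injective, and the symmetric argument (swap the roles of $\lie{g}$ and $\lie{h}$) gives injectivity of $\tsev{\lie{h}}{x_0}$. Combined with the given surjectivity, both evaluation maps are linear isomorphisms onto $T_{x_0}\tsmaniM$, so $\dim \lie{g} = \dim \lie{h} = \dim \tsmaniM$, and $\lie{g}$, $\lie{h}$ are locally transitive and commuting, hence reciprocal in the sense of \theoremref{theorem:reciprocal Lie algebras}.

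I expect no serious obstacle; the only delicate point is handling possibly incomplete flows, but since all statements are local in $t$ and we only need the flow map $\Phi$ defined on a small neighborhood of the origin, the standard short-time existence of flows is enough. The essential content is the single identity $(\phi_t^Y)_* X = X$ for commuting $X,Y$, and the use of local transitivity of $\lie{h}$ to promote pointwise vanishing to open-set vanishing.
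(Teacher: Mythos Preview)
Your argument is correct. The paper itself does not prove this lemma in the text; it cites \cite[\cpages 242--243]{Eendebak2006} for the proof of both \theoremref{theorem:reciprocal Lie algebras} and this lemma, so there is no in-paper proof to compare against directly. That said, the flow argument you give --- using $[X,Y]=0 \Rightarrow (\phi_t^Y)_* X = X$ to propagate a zero of $X$ along $\lie{h}$-flows, then using surjectivity of $\tsev{\lie{h}}{x_0}$ to fill out a neighborhood via a composite flow chart, and finally the open--closed argument on connected $\tsmaniM$ --- is exactly the standard and expected proof, and is almost certainly what the cited reference contains. Your handling of the one genuine subtlety (incompleteness of flows) is also fine: only short-time existence near $x_0$ is needed for $\Phi$ to be a local diffeomorphism.
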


The Lie algebra of a Lie group $G$ is equal to the tangent space at the identity element $\lie{g}=T_e G$. The Lie algebra can be identified with both the space
of \leftinvariant vector fields on $G$ and the space of \rightinvariant vector fields
on $G$. The spaces of left- and \rightinvariant vector fields on a Lie group
are reciprocal Lie algebras \cite[\cpages 41,42]{Duistermaat2000}.

\subsection{Geometric construction}
\label{subsection:geometric construction}

In this section we will show how to construct the tangential symmetries
of Darboux integrable \hdecomposable systems.

\paragraph{Commuting vector fields}
Let $(\tsmaniM,\F,\G)$ be a Darboux integrable \hdecomposable system
with Darboux projection $\DarbouxPr:\tsmaniM \to \tsBase$.
We assume that $\V=\F \vbdirsum \G$ has no invariants and
condition~\eqref{equation:condition rank-invariants} holds.
Select locally a basis of commuting vector fields $\ti{F}_1, \ldots, \ti{F}_{\niG}$,
 $\niG=\rank \F$ for $\tsBaseF$
and $\ti{G}_1, \ldots, \ti{G}_{\niF}$ for $\tsBaseG$, \ie $[\ti{F}_i,\ti{F}_j]=0$, $[\ti{G}_i,\ti{G}_j]=0$.
As vector fields on $\tsBase=\tsBaseF \times \tsBaseG$ we then automatically have $[\ti{F}_i, \ti{G}_j]=0$.
We can lift these vector fields to unique vector fields in $\tsmaniM$ by requiring that the lifted vector fields
are contained in $\V$. We write $F_i$ and $G_j$ for the lift of
$\ti{F}_i$ and $\ti{G}_j$, respectively.
Since the vector fields $F_i$ and $G_j$ are contained in $\F$ and $\G$, respectively, their Lie brackets 
$[F_i,G_j]$ must be contained in $\V$.
On the other hand, the projections have Lie bracket $[\ti{F}_i,\ti{G}_j]$
equal to zero and therefore $[F_i, G_j]$ must be contained in the tangent space of the fibers of the projection.
But $\V$ is transversal to the fibers of the projection and it follows that $[F_i, G_j]=0$.
We will use the vector fields $F_i$ and $G_j$ to construct various Lie algebras
on the fibers of the projection. We only make a \explemph{choice} of vector fields
to make the constructions and the proofs easier, most of the Lie algebras that we 
construct are \explemph{independent} of the particular choice of $F_i$ and $G_j$.

We define $\tsf$ as the Lie algebra of vector fields (over $\R$, not over $\C^\infty(\tsmaniM)$)
generated by $F_i$, $1 \leq i \leq \rank \F$. This Lie algebra is contained
in the Lie algebra of vector fields in the completion of $\F$ and is not necessarily finite-dimensional.
We define $\tsg$ as the Lie algebra of vector fields (over $\R$)
generated by $G_i$, $1 \leq i \leq \rank \G$.

For two vector fields $F_i, F_j$ the Lie bracket $[F_i,F_j]$ is tangent to
the fibers of the projection $\DarbouxPr$.
This follows from the fact that the $F_i$ are lifts of commuting vector fields and
hence $T\DarbouxPr([F_i,F_j]) = [\ti{F}_i,\ti{F}_j]=0$.
This implies 
that the derived Lie algebras $\tsf'$ and $\tsg'$ consist of vector fields
that are tangential to the projection.
Since the generators $F_i$ for $\tsf$ are not tangential,
we conclude $\tsf'=\tsf \cap \tsTangentV$.
The elements of $\tsf'$ commute with the elements in $\tsg$ and hence 
the elements of $\tsf'$ are symmetries of $\G$ that are tangential
to the projection $\DarbouxPr$.

The discussion above shows that the vector fields in $\tsf'$
are all tangential symmetries of $\G$.
 We will see
below~(also see \lemmaref{lemma:tangential symmetries:x1}) that the
tangential symmetries can be expressed in terms
of the Lie algebras $\tsf'$ and $\tsg'$.

\paragraph{Restriction to fibers}
For every point $\tsBaseP \in \tsBase=\tsBaseF \times \tsBaseG$ we 
write $\tsFiber{\tsmaniM}{\tsBaseP}$ for the fiber $\DarbouxPr^{-1}(\tsBaseP)$.
We write $\tsTangentVFib{\tsBaseP}$ for the vector fields
on $\tsFiber{\tsmaniM}{\tsBaseP}$.
For the tangential vector fields $\tsTangentV$ we define the
restriction map
\begin{align}
\tsResFiber{\tsBaseP}: \tsTangentV \to \tsTangentVFib{\tsBaseP} .
\label{equation:tangential symmetries:restriction map:general}
\end{align}
The restriction map is a Lie algebra homomorphism.

Since the vector fields in $\tsf'$ and $\tsg'$ are
tangential we can define the
restriction maps
$\tsResFiber{\tsBaseP}: \tsf' \to\tsTangentVFib{\tsBaseP}$ 
and
$\tsResFiber{\tsBaseP}: \tsg' \to \tsTangentVFib{\tsBaseP}$ as well.
We denote the image of $\tsf'$ under $\tsResFiber{\tsBaseP}$ by $\tsfdF{\tsBaseP}$
and the image of $\tsg'$ under $\tsResFiber{\tsBaseP}$ by $\tsgdF{\tsBaseP}$.
Since the
Lie algebras $\tsf'$ and $\tsg'$ commute,
the Lie algebras $\tsfdF{\tsBaseP}$ and $\tsgdF{\tsBaseP}$
are commuting Lie algebras of
 vector fields on $\tsFiber{\tsmaniM}{\tsBaseP}$.

Since $\F$ has $\niF$ invariants, the codimension of the completion of $\F$
is equal to $\niF$ on an open dense subset. The same is true for $\G$.
In the constructions that
follow we will always assume that we have restricted to
such open dense subsets.
%
Under this assumption
the vector fields in the Lie algebra
$\tsf'$ span the tangent space to the fiber.
We can choose a set of vector fields $X_1, \ldots, X_{\tsdimFiber}$
($\tsdimFiber$ is the dimension of the fibers) in $\tsf'$ such
that the restriction of these vector fields to a fiber $\tsFiber{\tsmaniM}{\tsBaseP}$
is a basis for $\tsfdF{\tsBaseP}$.
Let $X$ be a tangential symmetry of $\G$. Since
the vector field $X$ is tangential 
we can write $X=\sumConv{\sum_{j=1}^\tsdimFiber} c^j X_j$ for certain functions $c^j$.
Since $X$ is a tangential symmetry, the commutator of $X$ with $\G$ is contained
in $\G$ and hence for all $Y \subset \G$
\begin{align*}
[X,Y] &\equiv \sumConv{\sum_{j=1}^\tsdimFiber} [c^j X_j, Y] \equiv \sumConv{\sum_{j=1}^\tsdimFiber} c_j [X_j, Y] -Y(c^j)X_j  \equiv \sumConv{\sum_{j=1}^\tsdimFiber} Y(c_j) X_j \equiv 0 \mod \G.
\end{align*}
This implies $Y(c_j)=0$ for all $Y \subset \G$ and hence the $c^j$
are functions of the invariants of $\G$ only, so
$c^j \in \DarbouxPr_1^*(C^\infty(\tsBaseF))$.
This proves that the tangential symmetries of $\G$ are
a $\DarbouxPr_1^*(C^\infty(\tsBaseF))$-module over
the Lie algebra $\tsf'$.
We have proved
\begin{lemma}
\label{lemma:tangential symmetries:x1}
The tangential symmetries of $\G$ are a $\DarbouxPr_1^*(C^\infty(\tsBaseF))$-module over $\tsf'$.
The tangential symmetries of $\F$ are a $\DarbouxPr_2^*(C^\infty(\tsBaseG))$-module over $\tsg'$.
\end{lemma}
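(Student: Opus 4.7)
The plan is to prove the first assertion; the second follows by symmetry, swapping the roles of $\F,\G$ and $\tsf,\tsg$. The preceding discussion already establishes the inclusion $\tsf' \subset \tsTangentV$ and that elements of $\tsf'$ are tangential symmetries of $\G$, so what remains is: \emph{every} tangential symmetry of $\G$ is an $\DarbouxPr_1^*(C^\infty(\tsBaseF))$-linear combination of elements of $\tsf'$, and conversely any such combination is again a tangential symmetry of $\G$.

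First I would invoke the basis $X_1,\ldots,X_\tsdimFiber \in \tsf'$ constructed just before the lemma, whose restrictions to each fiber form a basis of $\tsfdF{\tsBaseP}$. Because the fibers have dimension $\tsdimFiber$ and the $X_j$ are vertical, the $X_j$ pointwise span the tangent bundle $\vbZ$ to the fibers. Any tangential symmetry $X$ of $\G$ lies in $\vbZ$, so it can be written uniquely as $X = \sum_j c^j X_j$ for smooth functions $c^j$ on $\tsmaniM$. To pin down the $c^j$ I would test the symmetry condition against an arbitrary $Y \VinD \G$. Writing $Y = \sum_i a^i G_i$ and using $[X_j, G_i] = 0$ (since $X_j \in \tsf'$ commutes with $\tsg$, which contains the $G_i$), one computes $[X_j, Y] = \sum_i X_j(a^i) G_i \VinD \G$, and hence
\begin{align*}
[X, Y] \;\equiv\; -\sum_j Y(c^j)\, X_j \quad \mod \G.
\end{align*}
Since $T\tsmaniM = \F \vbdirsum \vbZ \vbdirsum \G$, the $X_j$ remain linearly independent modulo $\G$ at every point, so the symmetry condition $[X,Y] \equiv 0 \bmod \G$ for all $Y \VinD \G$ forces $Y(c^j) = 0$ for each $j$. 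Thus each $c^j$ is an invariant of $\G$, hence of the form $\DarbouxPr_1^*(f^j)$ for some $f^j \in C^\infty(\tsBaseF)$.

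Finally I would verify the converse: for any $c^j \in \DarbouxPr_1^*(C^\infty(\tsBaseF))$ the vector field $X = \sum_j c^j X_j$ is tangential since each $X_j$ is, and the same computation now yields $[X,Y] \equiv -\sum_j Y(c^j) X_j \equiv 0 \bmod \G$ because $Y(c^j) = 0$ for all $Y \VinD \G$. Hence $X$ is a tangential symmetry of $\G$, and the two modules coincide. I do not expect any deep obstacle; the only technical care needed is to work on the open dense set where $\F,\G$ attain their generic codimensions and the $X_j$ pointwise trivialize $\vbZ$, which matches the local standing assumptions already in force.
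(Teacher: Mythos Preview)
Your proof is correct and follows essentially the same route as the paper: choose a vertical frame $X_1,\ldots,X_\tsdimFiber\in\tsf'$, expand a tangential symmetry $X=\sum c^j X_j$, compute $[X,Y]\bmod\G$ for $Y\VinD\G$, and read off that each $c^j$ is a $\G$-invariant. Your version is slightly more explicit than the paper's in two places---you justify the linear independence of the $X_j$ modulo $\G$ via the decomposition $T\tsmaniM=\F\vbdirsum\vbZ\vbdirsum\G$, and you spell out the converse direction---but these are elaborations rather than a different argument.
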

\blockpar
The lemma above also proves~\theoremref{theorem:tangential symmetries}.

The Lie algebras $\tsf'$ and $\tsg'$ depend on the choice of commuting
vector fields $\ti{F}_i$ and $\ti{G}_j$, respectively. The Lie algebras
of tangential symmetries $\tsTSf$ and $\tsTSg$ are independent of this choice (this is clear from \definitionref{definition:tangential symmetries}).
In the lemma below we show that the Lie algebras $\tsfdF{\tsBaseP}$ and
$\tsgdF{\tsBaseP}$ are also independent of the choice of commuting vector fields.

\begin{lemma}
For every point $\tsBaseP \in \tsBase$ the Lie algebras
$\tsfdF{\tsBaseP}$ and $\tsgdF{\tsBaseP}$ on $\tsFiber{\tsmaniM}{\tsBaseP}$ are invariantly defined
reciprocal 
Lie algebras.
The type of the Lie algebra does not depend on the point $\tsBaseP\in \tsBase$.
\end{lemma}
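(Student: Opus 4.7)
The plan is to establish three things in sequence: intrinsic invariance of $\tsfdF{\tsBaseP}$ and $\tsgdF{\tsBaseP}$, reciprocality on each fiber, and independence of the isomorphism type. I would organize the argument around the intrinsic object $\tsTSf$ (and symmetrically $\tsTSg$), using the previously established module structure together with the commuting flows on $\tsmaniM$.

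First, for invariance, I would show the identity
\[
 \tsfdF{\tsBaseP} \;=\; \tsResFiber{\tsBaseP}(\tsTSf),
\]
and symmetrically $\tsgdF{\tsBaseP} = \tsResFiber{\tsBaseP}(\tsTSg)$. The inclusion $\subset$ is immediate since $\tsf' \subset \tsTSf$. For the reverse inclusion, apply \lemmaref{lemma:tangential symmetries:x1}: any $X \in \tsTSf$ can be written as $\sum_k c^k X_k$ with $c^k \in \DarbouxPr_1^*(C^\infty(\tsBaseF))$ and $X_k \in \tsf'$. Because fibers of $\DarbouxPr$ lie inside fibers of $\DarbouxPr_1$, the coefficients $c^k$ are constant on $\tsFiberI{\tsBaseP}$, so $X|_{\tsFiberI{\tsBaseP}} = \sum_k c^k(\tsBaseP)\, X_k|_{\tsFiberI{\tsBaseP}} \in \tsfdF{\tsBaseP}$. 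Since $\tsTSf$ is defined invariantly in \definitionref{definition:tangential symmetries}, so is its restriction $\tsfdF{\tsBaseP}$; in particular neither depends on the choice of commuting bases $\ti F_i$, $\ti G_j$.

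Next, I would verify that $\tsfdF{\tsBaseP}$ and $\tsgdF{\tsBaseP}$ are reciprocal Lie algebras on $\tsFiberI{\tsBaseP}$ by applying \lemmaref{lemma:reciprocal Lie algebras:surjective implies injective}. Commutation follows from $[F_i,G_j]=0$, which gives $[\tsf,\tsg]=0$, hence $[\tsf',\tsg']=0$, and the restriction map being a Lie algebra homomorphism transports this to $[\tsfdF{\tsBaseP},\tsgdF{\tsBaseP}]=0$. Transitivity holds by construction: the $X_i$ (and symmetrically the $Y_j \in \tsg'$) were chosen so that their restrictions span $T_\tsPointM \tsFiberI{\tsBaseP}$ at every point. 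The lemma then gives both evaluation maps injective, so $\dim \tsfdF{\tsBaseP} = \dim \tsgdF{\tsBaseP} = \tsdimFiber = \dim \tsFiberI{\tsBaseP}$, and the pair is reciprocal.

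Finally, for the constancy of the Lie algebra type I would use the flows of the lifted vector fields $F_i$. Let $\phi_t$ denote the flow of $F_i$. Since $F_i \in \F \subset \vbcompl{\F}$, the flow $\phi_t$ preserves the foliation by leaves of $\vbcompl{\F}$; since $[F_i,G_j]=0$, it pushes each $G_j$ to itself, hence preserves $\G$ and its completion $\vbcompl{\G}$. Consequently $\phi_t$ preserves $\vbZ=\vbcompl{\F}\cap\vbcompl{\G}$ and maps fibers diffeomorphically to fibers, say $\psi := \phi_t\big|_{\tsFiberI{\tsBaseP}}\colon \tsFiberI{\tsBaseP}\to \tsFiberI{\tsBaseP'}$ with $\tsBaseP'=(\phi_t^{\tsBaseF}\tsBasePF,\tsBasePG)$. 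Because $\phi_t$ is a symmetry of $\G$ and preserves $\vbZ$, pushforward carries $\tsTSf$ into itself; using the naturality identity $((\phi_t)_*X)\big|_{\tsFiberI{\tsBaseP'}}=\psi_*\bigl(X\big|_{\tsFiberI{\tsBaseP}}\bigr)$ together with the intrinsic description from paragraph one, one gets $\psi_*\tsfdF{\tsBaseP}=\tsfdF{\tsBaseP'}$, a Lie algebra isomorphism. Composing flows of the $F_i$ and $G_j$ (whose projections span $T\tsBase$) one reaches any nearby fiber, so the type is locally constant, hence constant on the connected base.

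The main obstacle I expect is the second paragraph's intrinsic characterization, because it is what turns a construction that depends on auxiliary choices of commuting frames into an object attached to the Darboux integrable system itself; everything downstream (including the flow transport in paragraph three) relies on recognising $\tsfdF{\tsBaseP}$ as the restriction of the intrinsic $\tsTSf$.
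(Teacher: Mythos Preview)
Your proof is correct, but your route differs from the paper's in two places, and the comparison is worth recording.

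For invariance, the paper first establishes reciprocality and then uses a neat two-sided trick: by definition $\tsfdF{\tsBaseP}$ depends only on the choice of $\ti F_i$, while as the centralizer of $\tsgdF{\tsBaseP}$ it depends only on the choice of $\ti G_j$; hence it depends on neither. Your approach instead identifies $\tsfdF{\tsBaseP}=\tsResFiber{\tsBaseP}(\tsTSf)$ directly from \lemmaref{lemma:tangential symmetries:x1}, which is arguably more transparent since it ties $\tsfdF{\tsBaseP}$ to the manifestly intrinsic object $\tsTSf$ without first invoking reciprocality.

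For constancy of the type, the paper argues via structure coefficients: with $[X_i,X_j]=c^k_{ij}X_k$ and $X_k\in\tsf'$, the $c^k_{ij}$ are invariants of $\G$ and hence constant along leaves of $\vbcompl{\G}$, so the isomorphism class of $\tsfdF{\tsBaseP}$ is constant as $\tsBaseP$ moves in the $\tsBaseF$-direction; the symmetric statement for $\tsgdF{\tsBaseP}$ plus the anti-isomorphism of reciprocal algebras then covers the $\tsBaseG$-direction. Your flow-transport argument is a legitimate alternative and has the advantage of producing an explicit isomorphism $\psi_*$ between fibers. One small point to make explicit: your sentence ``composing flows of the $F_i$ and $G_j$'' glosses over why the $G_j$-flow transports $\tsfdF{\tsBaseP}$. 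The flow of $G_j$ is a symmetry of $\F$, not of $\G$, so it does not obviously preserve $\tsTSf$; what it does preserve is $\tsTSg$, hence $\tsgdF{\tsBaseP}$, and then the centralizer characterization (or the anti-isomorphism of reciprocal algebras, exactly as in the paper) carries $\tsfdF{\tsBaseP}$ along. With that one line added, your argument is complete.
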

\begin{proof}
The Lie algebra $\tsf'$ is tangential to the fibers of the projection. 
Since $\F$ has only $\niF=\rank \G$ invariants, it follows
that for $y$ in an open dense subset of $\tsmaniM$ the image
of the evaluation map $\tsev{\tsf'}{y}: \tsf' \to T_y \tsmaniM_{\DarbouxPr(y)}$ spans
the tangent space $T_{y} M_{\DarbouxPr(y)}$.
This in turn implies that for all points $x$ in an open subset of $\tsFiber{\tsmaniM}{\tsBaseP}$
the evaluation map $\tsev{\tsfdF{\tsBaseP}}{x}: \tsfdF{\tsBaseP} \to T_x \tsFiber{\tsmaniM}{\tsBaseP}$ is surjective.
The same is true for $\tsgdF{\tsBaseP}$.
Hence we can apply
\lemmaref{lemma:reciprocal Lie algebras:surjective implies injective}
to $\tsfdF{\tsBaseP}$ and $\tsgdF{\tsBaseP}$ and
conclude that $\tsfdF{\tsBaseP}$ and $\tsgdF{\tsBaseP}$ are reciprocal Lie algebras on
$\tsFiber{\tsmaniM}{\tsBaseP}$.
From the definitions it follows directly that the Lie algebra $\tsfdF{\tsBaseP}$ only depends on the choice of vector fields $F_i$, and not on the choice of the vector fields $G_j$.
In the same way $\tsgdF{\tsBaseP}$ does only depend on the choice of $G_j$.
On the other hand, $\tsfdF{\tsBaseP}$ is the centralizer of $\tsgdF{\tsBaseP}$ in the fiber $\tsFiber{\tsmaniM}{\tsBaseP}$
and hence $\tsfdF{\tsBaseP}$ only depends on the choice of $G_j$. This implies that
$\tsfdF{\tsBaseP}$ is invariantly defined.
For the same reason $\tsgdF{\tsBaseP}$ is invariantly defined.

Make a choice of $\tsdimFiber$ vector fields $X_i$ in $\tsf'$ such that
at each point
the vector fields span the tangent space to the fibers of the projection.
Locally, near a point $x\in \tsFiber{\tsmaniM}{\tsBaseP}$, we can think of the vector fields $X_i$ as defining a
section of the homomorphism $\tsResFiber{\tsBaseP}: \tsf' \to \tsfdF{\tsBaseP}$. 

Since the vector fields $X_i$ span the tangent space to the fiber and the commutator
of two tangential vector fields is tangential again, we 
have $[X_i, X_j]=\sumConv{\sum_{k=1}^{\tsdimFiber}} {c^k_{ij}} X_k$ for certain functions $c^k_{ij}$.
Since the $X_i$ commute with $\tsg$ it follows that the functions
${c^k_{ij}}$ depend only on the invariants of $\G$.
If we restrict to one of the leaves of the completion of $\G$, then
 the invariants of $\G$ are constant and hence the coefficients $c^k_{ij}$ will be constant.
Locally, the leaves are foliated by the fibers of the projection
and the fact that the coefficients ${c^k_{ij}}$ depend only on the invariants of $\G$
shows that all fibers $\tsFiber{\tsmaniM}{\tsBaseP}$ in the same leaf
of $\vbcompl{\G}$
have an isomorphic Lie algebra $\tsfdF{\tsBaseP}$.
So if we move in the direction of the completion of ${\G}$, then the type of 
$\tsfdF{\tsBaseP}$ does not change.
For the same reason the Lie algebras $\tsgdF{\tsBaseP}$ for all
fibers $\tsFiber{\tsmaniM}{\tsBaseP}$ in a leaf of the completion of ${\F}$ have the same type.

The type of $\tsfdF{\tsBaseP}$ is equal to the
type of $\tsgdF{\tsBaseP}$ (reciprocal Lie algebras are anti-isomorphic).
Therefore if we move in the directions of $\F$ and $\G$ the type
of both $\tsgdF{\tsBaseP}$ and $\tsfdF{\tsBaseP}$ does not change.
Hence the type of the reciprocal Lie algebras on the fibers is independent of the choice of
fiber $\tsFiber{\tsmaniM}{\tsBaseP}$.
\end{proof}
\blockpar
We conclude that
the fibers of the projection carry an invariant structure of two reciprocal
Lie algebras. Since the type is locally constant, the type of the Lie
algebra is an \explemph{invariant} of Darboux integrable \hdecomposable systems.

\paragraph{Back to local vector fields}
The next step is to extend the Lie algebras on the fibers to Lie algebras on $\tsmaniM$. The following lemma proves
\theoremref{theorem:main theorem} by constructing
$\tsTLF$ and $\tsTLG$ as subalgebras of $\tsTSf$ and $\tsTSg$, respectively.
\begin{lemma}
\label{lemma:global Lie algebras}
On $\tsmaniM$ there exist
finite-dimensional Lie subalgebras $\tsTLF$, $\tsTLG$ of $\tsTSf$
and $\tsTSg$, respectively, such that for all fibers $\tsFiber{\tsmaniM}{\tsBaseP}$
the restriction map $\tsResFiber{\tsBaseP}$
defines a Lie algebra isomorphism to the Lie algebras on the fibers.
The Lie algebras $\tsTLF$ and $\tsTLG$ are commuting.
\end{lemma}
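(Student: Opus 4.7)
The plan is to build $\tsTLF$ by rigidifying a generating set of $\tsf'$ so that the structure constants become literal constants, and to mirror this for $\tsTLG$ starting from $\tsg'$. Working locally on $\tsmaniM$ (on the open dense set where the previous lemma applies), I would select $\tsdimFiber$ vector fields $X_1, \ldots, X_{\tsdimFiber} \in \tsf'$ whose restrictions form a basis of every fiber Lie algebra $\tsfdF{\tsBaseP}$; this is possible by the surjectivity of $\tsResFiber{\tsBaseP}$ combined with the openness of the full-rank condition. The discussion preceding the previous lemma gives $[X_i, X_j] = c^k_{ij} X_k$ with $c^k_{ij}$ being $\G$-invariants, i.e.\ $c^k_{ij} = \DarbouxPr_1^*(\tilde c^k_{ij})$, and by the previous lemma all $\tsfdF{\tsBaseP}$ share a single isomorphism type $\lie{l}$.

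Fix a basis of $\lie{l}$ with structure constants $C^k_{ij}$. I would then seek a smooth matrix function $A: \tsBaseF \to \GL(\tsdimFiber, \R)$ satisfying the intertwining relation $A^k_i A^l_j \tilde c^m_{kl} = C^n_{ij} A^m_n$. A pointwise solution exists because every $\tsfdF{\tsBaseP} \cong \lie{l}$, and a smooth local solution exists because the orbit map $\GL(\tsdimFiber, \R) \to \GL(\tsdimFiber, \R)\cdot \lie{l}$ is a principal $\mathrm{Aut}(\lie{l})$-bundle and so admits smooth local sections. Setting $Y_i := \DarbouxPr_1^*(A^j_i) X_j$, and using that every $X_j \in \tsf'$ is tangential to $\DarbouxPr$ and hence annihilates all pullbacks from $\tsBase$, the Leibniz rule collapses $[Y_i, Y_j]$ to $\DarbouxPr_1^*(A^k_i A^l_j \tilde c^m_{kl}) X_m = C^n_{ij} Y_n$. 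Hence $\tsTLF := \mathrm{span}_{\R}(Y_1, \ldots, Y_{\tsdimFiber})$ is a finite-dimensional Lie algebra isomorphic to $\lie{l}$.

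It then remains to check the required properties. The identity $[\tsf, \tsg] = 0$ forces $[X_j, G_k] = 0$, and $G_k$ annihilates the $\G$-invariant coefficients $\DarbouxPr_1^*(A^j_i)$, so $Y_i$ commutes with every $G_k$ and is consequently a symmetry of $\G$; tangentiality is inherited from the $X_j$, so $\tsTLF \subset \tsTSf$. The coefficients are constant along each fiber, so $\tsResFiber{\tsBaseP}$ sends $\tsTLF$ onto $\tsfdF{\tsBaseP}$ by a fixed invertible linear change of basis and is therefore a Lie algebra isomorphism. An analogous construction starting from $\tsg'$, with coefficients pulled back via $\DarbouxPr_2$, produces $\tsTLG$, and $[\tsTLF, \tsTLG] = 0$ follows from the two facts that $[\tsf', \tsg'] \subset [\tsf, \tsg] = 0$ and that both $Y_i \in \tsTLF$ and the tangential generators of $\tsTLG$ annihilate the pulled-back coefficients appearing in each other's expansions. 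The main obstacle in the whole argument is the smoothness of $A$, which reduces to smooth local triviality of a family of Lie algebras of constant isomorphism type; everything else is routine Leibniz bookkeeping.
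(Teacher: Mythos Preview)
Your proposal is correct and follows essentially the same approach as the paper: choose a frame $X_1,\dots,X_{\tsdimFiber}$ in $\tsf'$, observe that the structure coefficients are pulled back from $\tsBaseF$, and then gauge by a smooth $\GL(\tsdimFiber,\R)$-valued function on $\tsBaseF$ to make the structure constants literally constant. The paper likewise isolates the smoothness of the gauge $A$ (its $\mu$) as the only nontrivial point and treats it in a separate remark via the principal-bundle structure of $\GL(\tsdimFiber,\R)\to\GL(\tsdimFiber,\R)/\mathrm{Stab}$; your write-up is in fact more explicit than the paper's about the Leibniz bookkeeping and about why $\tsTLF$ and $\tsTLG$ commute.
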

\begin{proof}
Choose a basis of vector fields $X_j$ for $\tsTangentV$ contained in $\tsf'$.
The Lie brackets of the $X_j$ define the structure coefficients
\begin{align}
[X_i,X_j] &= \sumConv{\sum_{k=1}^{\tsdimFiber}} c^k_{ij} X_k . 
\label{ts:structure constants}
\end{align}
We already know that these structure coefficients are
functions in $\DarbouxPr_1^*(C^\infty(\tsBaseF))$.
Choose a fiber $\tsFiber{\tsmaniM}{\tsBaseP_0}$.
Since the type of the Lie algebra is constant for each fiber,
there exists for every point $\tsBasePF \in \tsBaseF$ a 
linear transformation $\tsSCtrmap(\tsBasePF) \in \GL(\tsdimFiber,\R)$ such
that the vector fields $Y_i=\tsSCtrmap(\tsBasePF)_i^j X_j$ have the
same structure constants as the restrictions of $X_j$
to the fiber $\tsFiber{\tsmaniM}{\tsBaseP_0}$.
\afootnote{Since the vector fields $X_i$ do not act on the functions in $\DarbouxPr_1^*(C^\infty(\tsBaseF))$!}%
Locally we can arrange that $\tsSCtrmap:\tsBaseF \to \GL(\tsdimFiber,\R)$ is a smooth map,
see \remarkref{remark:smooth lift}.

The new vector fields $Y_j$ have the structure of a
finite-dimensional Lie algebra $\tsTLF$ and this Lie algebra consists
of tangential symmetries of $\G$.
In the same way we can construct
a finite-dimensional Lie subalgebra $\tsTLG$ of $\tsTSg$.
\end{proof}
\blockpar
We call $\tsTLF$ and $\tsTLG$ \defemph{tangential Lie algebras} of
symmetries of $\G$ and $\F$, respectively.
The Lie algebras $\tsTLF$ and $\tsTLG$ are not-unique.
We can for example multiply a basis $X_j$ of $\tsTLF$ with
a matrix $\glC_j^k \in \GL(\tsdimFiber,\R) \otimes \DarbouxPr_1^*(C^\infty(\tsBaseF))$.
The new vector fields $Y_j= \sumConv{\sum_{l=1}^\tsdimFiber} \glC_j^k X_k$ have
structure coefficients
$d^k_{ij}(x) = \sumConv{\sum_{\alpha,\beta,l=1}^{\tsdimFiber}}\glC^\alpha_i(x) \glC^\beta_j(x) c^l_{\alpha\beta} (\glC^{-1})^k_l(x)$. 
If the new structure coefficients are constants (for example if
 for all points $x$ the matrix ${\glC^k_j(x)}$ is in the stabilizer of the structure constants),
then the vector fields $Y_j$ define a tangential 
Lie algebra of tangential symmetries as well.

\newcommand{\tsStrucSpace}{C}
\newcommand{\tsStrucOrbit}{A}
\newcommand{\tsStrucPoint}{a}

\newcommand{\tsStrucStab}{G_\tsStrucPoint}	

\begin{acomment}
\newcommand{\Cc}{c}
\begin{remark}
The general linear group acts on the structure coefficients as follows.
\begin{align*}
 Y_k &= \mu^l_k X_l, [Y_i,Y_j]=d^k_{ij} Y_k , \\
d^k_{ij} &= \mu^\alpha_i \mu^\beta_j c^l_{\alpha\beta} (\mu^{-1})^k_l . 
\end{align*}

Elements in the stabilizer of the Lie algebra should satisfy
\begin{align*}
 \Cc^\alpha_i \Cc^\beta_j c^k_{\alpha\beta} =c^\gamma_{ij} \Cc^k_\gamma 
\end{align*}
for all $i,j,k$.
At low dimension ($n=1$ or for abelian Lie algebras) all elements are in the stabilizer (since all structure coefficients are zero). Are there Lie algebras for
which the stabilizer group is trivial?
To make $\tsTLF$ and $\tsTLG$ non-unique we need a continuous non-trivial
path in the stabilizer group.
 \end{remark}
\end{acomment}

\begin{remark}
\label{remark:smooth lift}
Let $\mathcal{L}$ be the space of Lie algebra structures on $\R^\tsdimFiber$.
This space is an algebraic variety in $\tsStrucSpace_\tsdimFiber=\Lambda^2(\R^\tsdimFiber)^* \otimes \R^\tsdimFiber$
defined 
the Jacobi identity. The group $G=\GL(\tsdimFiber,\R)$ acts 
on $\tsStrucSpace_\tsdimFiber$.
The vector fields $X_j$ from \lemmaref{lemma:global Lie algebras} define a
map $\tsSCmap: \tsBaseF \to \tsStrucSpace_\tsdimFiber$ by assigning to
a point $\tsBasePF \in \tsBaseF$ the structure constants of $X_j$ in
a fiber above $\tsBasePF$ (the structure coefficients are independent of the
point in $\tsBaseG$). Since the vector fields $X_i$ are smooth, the map
$\tsSCmap$ is smooth as well.
By assumption the image of $\tsSCmap$ is contained in a single orbit $\tsStrucOrbit$ of
the action of $G$.

Let $\tsStrucPoint$ be equal to $\tsSCmap(x)$.
Then the orbit $\tsStrucOrbit$ is equal to $G/G_{\tsStrucPoint}$, where $G_{\tsStrucPoint}$
 is the stabilizer subgroup of the point $\tsStrucPoint$ in the orbit.
We want to prove that there is a smooth lift of the map $\tsSCmap$ to
a map $\tsSCtrmap: \tsBaseF \to \GL(\tsdimFiber,\R)$ such that the diagram below
is commutative.
\begin{xydiagram}
\xymatrix{
 & \GL(\tsdimFiber,\R) \ar[d]  \\
\tsBaseF \ar[r]^{\tsSCmap} \ar@{-->}[ru]^{\tsSCtrmap}
	 & \tsStrucOrbit \save[]+<.7cm,0cm>*{\subset \tsStrucSpace_\tsdimFiber} \restore
		 \save[]+<-.05cm,-.45cm>*{\begin{turn}{-90}=\end{turn}} \restore 
		 \save[]+<.0cm,-0.9cm>*{G/G_a} \restore 
}
\end{xydiagram}
We have to be carefull here because the map $\tsSCmap$ is continuous
with respect to the topology on $\tsStrucOrbit$ induced from
the surrounding space $\tsStrucSpace_{\tsdimFiber}$.
The structure on $\tsStrucOrbit$ as the homogeneous space $G/\tsStrucStab$
might be different.
From the theory in~\cite[\mathpara A.3]{Eendebak2006} it follows
that $\tsSCmap$ is a smooth map to $G/\tsStrucStab$.
The projection $G\to G/\tsStrucStab$ is a principal fiber bundle and hence there is a smooth lift
of $\tsSCmap$.
\end{remark}

Using the Lie algebras $\tsTLF$ and $\tsTLG$ we can construct
a normal form for any Darboux integrable system.
\begin{theorem}
\label{theorem:Darboux integrable system}
Let $(\tsmaniM,\F,\G)$ be a Darboux integrable system
satisfying~\eqref{decomposable EDS:assumptions}.
 Then locally there is a unique local Lie group $H$ such that the
manifold $M$ is of the form
\begin{align*}
\tsBaseF \times \tsBaseG \times H ,
\end{align*}
with $\tsBaseF \subset \R^{\niG}$, $\tsBaseG \subset \R^{\niF}$. 
The Darboux projection $\DarbouxPr$ is given by the 
projection on $\tsBaseF \times \tsBaseG$.
The tangential symmetries of $\F$ and $\G$ are tangent to the fibers of $\pi$ and restricted
to each fiber they form reciprocal Lie algebras.
The left- and \rightinvariant vector fields on $H$ define the
tangential Lie algebras of symmetries on $M$.
\end{theorem}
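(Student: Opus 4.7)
The plan is to combine \theoremref{theorem:main theorem} with the flatness of the connection provided by the commuting lifts $F_i$, $G_j$ in order to exhibit $\tsmaniM$ as a product $\tsBaseF \times \tsBaseG \times H$ with the advertised structure. First, fix a basepoint $\tsBaseP_0 = (\tsBasePF_0, \tsBasePG_0) \in \tsBase$ and focus on the fiber $\tsFiber{\tsmaniM}{\tsBaseP_0}$. By \lemmaref{lemma:global Lie algebras} the tangential Lie algebras $\tsTLF$ and $\tsTLG$ restrict there to a commuting pair $\tsfdF{\tsBaseP_0}, \tsgdF{\tsBaseP_0}$ of finite-dimensional, locally transitive Lie algebras of dimension $\tsdimFiber = \dim \tsFiber{\tsmaniM}{\tsBaseP_0}$. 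By the standard correspondence between pairs of reciprocal, transitive Lie algebras on an $n$-manifold and the left- and right-invariant vector fields on a local Lie group (the example after \theoremref{theorem:reciprocal Lie algebras}), this data equips $\tsFiber{\tsmaniM}{\tsBaseP_0}$ with the structure of a local Lie group $H$, uniquely determined up to isomorphism by the (locally constant) type of $\tsfdF{\tsBaseP}$; choosing a reference point $h_0 \in \tsFiber{\tsmaniM}{\tsBaseP_0}$ fixes the identity.

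Next I would propagate this identification to every other fiber using the commuting lifts constructed in \sectionref{subsection:geometric construction}. Since $[F_i, F_j] = [G_i, G_j] = [F_i, G_j] = 0$, their joint flow defines a smooth, flat Ehresmann connection on $\DarbouxPr: \tsmaniM \to \tsBase$, and the associated parallel transport of the fiber $\tsFiber{\tsmaniM}{\tsBaseP_0}$ along paths in $\tsBase$ yields (locally, on a contractible neighborhood) a well-defined diffeomorphism $\Phi: \tsBaseF \times \tsBaseG \times H \to \tsmaniM$ making $\DarbouxPr \circ \Phi$ the projection onto $\tsBaseF \times \tsBaseG$. It then remains to verify that $\Phi$ pulls $\tsTLF$ and $\tsTLG$ back to the fiber-constant families of left- and right-invariant vector fields on $H$. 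This is because $\tsTLF$ commutes with each $G_j$ (hence is preserved by horizontal transport along $\tsBaseG$) and with each $F_i$ (by construction, since both lie in $\tsf$ which is a single Lie algebra of commuting lifts applied to the fiber data), so its restriction is transported rigidly; combined with the fact that on the reference fiber it consists of the left-invariant fields of $H$, this forces the pullback to be the left-invariant extension globally, and symmetrically for $\tsTLG$.

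The hard part will be ensuring that the Lie group structure propagated by the two commuting flows is consistent: one must show that transporting along $F_i$ first and $G_j$ second gives the same local Lie group structure on $\tsFiber{\tsmaniM}{\tsBaseP}$ as the reverse order. This consistency follows from $[F_i, G_j] = 0$ together with the fact that the structure coefficients in \eqref{ts:structure constants} (after rectification by the smooth map $\tsSCtrmap$ of \remarkref{remark:smooth lift}) are genuinely constant on $\tsBase$, not merely constant fiberwise. Uniqueness of $H$ then follows at once: any other local Lie group realizing the same reciprocal pair on a fiber is locally isomorphic to $H$ because the type of the reciprocal Lie algebras is an invariant of the Darboux integrable \hdecomposable system by \lemmaref{lemma:global Lie algebras}, and local Lie groups with isomorphic Lie algebras are locally isomorphic.
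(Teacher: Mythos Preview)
Your argument has a genuine gap: you assert that $[F_i,F_j]=[G_i,G_j]=0$, but this is false. What is established in \sectionref{subsection:geometric construction} is that the \emph{projected} vector fields $\ti F_i$ commute on $\tsBaseF$ and that the mixed brackets of the lifts vanish, $[F_i,G_j]=0$; the brackets $[F_i,F_j]$, however, are only \emph{vertical} (tangent to the fibers of $\DarbouxPr$), not zero. Indeed, those brackets are precisely what generate $\tsf'$, and the whole point of the construction is that $\tsf'$ restricts to a \emph{transitive} Lie algebra on each fiber. Hence the Ehresmann connection determined by $\V=\F\vbdirsum\G$ is not flat---its curvature is the vertical part of $[\F,\F]$ and $[\G,\G]$---and your parallel-transport trivialization $\Phi$ is not well defined. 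Your secondary claim that $\tsTLF$ commutes with each $F_i$ ``since both lie in $\tsf$'' fails for the same reason: $\tsf$ is not abelian.

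The route the paper has in mind (the theorem is stated as a summary of the preceding constructions rather than proved separately) is to trivialize using $\tsTLF$ itself. By \lemmaref{lemma:global Lie algebras}, $\tsTLF$ is an $\tsdimFiber$-dimensional Lie algebra of vector fields on $\tsmaniM$ whose restriction to every $\tsdimFiber$-dimensional fiber is locally transitive; integrating it yields a local Lie group $H$ acting simply transitively on each fiber, and a choice of local section of $\DarbouxPr$ then identifies $\tsmaniM$ with $\tsBaseF\times\tsBaseG\times H$. Under this identification the elements of $\tsTLF$ are, by construction, the left-invariant vector fields on the $H$-factor, and $\tsTLG$, being the fiberwise centralizer, becomes the right-invariant ones. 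Uniqueness of $H$ follows as you say, from the local constancy of the isomorphism type of $\tsfdF{\tsBaseP}$.
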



\paragraph{Commuting part}
In each fiber $\tsFiber{\tsmaniM}{\tsBaseP}$ of the projection, we can consider the centers of the Lie algebras
on the fiber. The center of $\tsfdF{\tsBaseP}$ is equal
to the center of  $\tsgdF{\tsBaseP}$.
We write $\tsCenterF{\tsBaseP}$ for the center of the Lie algebra on $\tsFiber{\tsmaniM}{\tsBaseP}$.
The distribution $\tsCenterDistr$ spanned by $\tsCenterF{\tsBaseP}$, $\tsBaseP \in \tsBase$, is invariantly defined and is
integrable and hence defines a local foliation of the fibers.

\begin{lemma}
The tangential vector fields that are symmetries of both $\F$ and $\G$ are tangential vector fields for which the restriction to a fiber is in the center of the Lie algebra of tangential symmetries. In particular, these vector fields are contained
in $\tsCenterDistr$.
The tangential symmetries of a \hdecomposable system are equal
to the center of $\tsTLF$ (which is equal to the center of $\tsTLG$).

The space of tangential symmetries of the system is equal
to the center of $\tsTLF$ (which is equal to the center of $\tsTLG$).
\end{lemma}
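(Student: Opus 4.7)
The plan is to combine \lemmaref{lemma:tangential symmetries:x1} with the fiberwise reciprocal structure of $\tsfdF{\tsBaseP}$ and $\tsgdF{\tsBaseP}$, and then to globalize using the normal form of \theoremref{theorem:Darboux integrable system}. Suppose $X$ is a tangential vector field that is simultaneously a symmetry of $\F$ and of $\G$. Applying \lemmaref{lemma:tangential symmetries:x1} to each symmetry condition, we may write
\begin{align*}
X = \sum_{j=1}^{\tsdimFiber} c^j X_j = \sum_{k=1}^{\tsdimFiber} d^k Y_k ,
\end{align*}
where $X_j$ is a basis of $\tsTLF$, $Y_k$ a basis of $\tsTLG$, $c^j \in \DarbouxPr_1^*(C^\infty(\tsBaseF))$ and $d^k \in \DarbouxPr_2^*(C^\infty(\tsBaseG))$. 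Restricting to a fiber $\tsFiber{\tsmaniM}{\tsBaseP}$, the first expression shows $\tsResFiber{\tsBaseP}(X) \in \tsfdF{\tsBaseP}$ and the second shows $\tsResFiber{\tsBaseP}(X) \in \tsgdF{\tsBaseP}$.

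Next I would verify the elementary identity $\tsfdF{\tsBaseP} \cap \tsgdF{\tsBaseP} = \tsCenterF{\tsBaseP}$. Any element of the intersection lies in $\tsgdF{\tsBaseP}$ while commuting with all of $\tsgdF{\tsBaseP}$ (since $\tsfdF{\tsBaseP}$ and $\tsgdF{\tsBaseP}$ commute), so it is central in $\tsgdF{\tsBaseP}$, and symmetrically in $\tsfdF{\tsBaseP}$. Conversely, anything central in $\tsfdF{\tsBaseP}$ commutes with $\tsfdF{\tsBaseP}$ and hence lies in its full centralizer, which by \theoremref{theorem:reciprocal Lie algebras} coincides with $\tsgdF{\tsBaseP}$. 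This yields $\tsResFiber{\tsBaseP}(X) \in \tsCenterF{\tsBaseP}$, proving the first assertion of the lemma and showing $X \VinD \tsCenterDistr$.

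To upgrade this fiberwise statement to the global identification with the center of $\tsTLF$, I would invoke \theoremref{theorem:Darboux integrable system} to model $\tsmaniM$ locally as $\tsBaseF \times \tsBaseG \times H$, with $\tsTLF$ and $\tsTLG$ realized as the left- and \rightinvariant vector fields on $H$ (extended trivially in the $\tsBaseF \times \tsBaseG$ factor). In these coordinates the two expressions above read $\sum c^j(\tsBasePF) X_j^L(h) = \sum d^k(\tsBasePG) X_k^R(h)$ at a point $(\tsBasePF, \tsBasePG, h)$; since the left-hand side is independent of $\tsBasePG$ and the right-hand side is independent of $\tsBasePF$, and since $X_j^L(h)$ and $X_k^R(h)$ are bases of $T_h H$ for each $h$, the coefficients $c^j$ and $d^k$ are forced to be constants. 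The vector field is then simultaneously left- and \rightinvariant, hence bi-invariant, which is precisely the condition that it lies in the center of $\tsTLF$ (equivalently in the center of $\tsTLG$, because bi-invariant vector fields belong to both algebras). Conversely, any element of the center of $\tsTLF$ is a tangential symmetry of $\G$ by construction, and its bi-invariance places it in $\tsTLG$ as well, so it is automatically a tangential symmetry of $\F$. The delicate point is forcing the coefficient functions to be constants; the normal form theorem reduces this to the separation-of-variables step above.
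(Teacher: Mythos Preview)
Your proof is correct. The fiberwise part---showing that the restriction of a tangential symmetry of both $\F$ and $\G$ lands in $\tsfdF{\tsBaseP}\cap\tsgdF{\tsBaseP}=\tsCenterF{\tsBaseP}$---is exactly the paper's argument, only with the identity $\tsfdF{\tsBaseP}\cap\tsgdF{\tsBaseP}=\tsCenterF{\tsBaseP}$ spelled out more carefully than the paper does.

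For the global identification with the center of $\tsTLF$ you take a different route from the paper. The paper argues directly: since the center of $\tsTLF$ spans $\tsCenterDistr$, one writes $X=\sum c^j Z_j$ with $Z_j$ a basis of that center; then the two symmetry conditions force the $c^j$ to be invariants of both $\G$ and $\F$, hence constants. You instead invoke the normal form of \theoremref{theorem:Darboux integrable system} and run a separation-of-variables argument on $\tsBaseF\times\tsBaseG\times H$ to force the coefficients $c^j(\tsBasePF)$ and $d^k(\tsBasePG)$ to be constants, identifying $X$ with a bi-invariant vector field on $H$. Your approach is more concrete and makes the ``constants'' step transparent, at the price of importing \theoremref{theorem:Darboux integrable system}; the paper's approach is self-contained but terser. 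One small point: when you invoke \lemmaref{lemma:tangential symmetries:x1} with $X_j$ a basis of $\tsTLF$ rather than elements of $\tsf'$, you are implicitly using that $\tsTLF$ is obtained from a spanning set in $\tsf'$ by an invertible $\DarbouxPr_1^*(C^\infty(\tsBaseF))$-valued matrix (\lemmaref{lemma:global Lie algebras}), so the module description transfers; it would be worth saying this explicitly.
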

\begin{proof}
Restricted to each fiber $\tsFiber{\tsmaniM}{\tsBaseP}$ a tangential symmetry is invariant under both
$\tsfdF{\tsBaseP}$ and $\tsgdF{\tsBaseP}$. 
Since the restriction is invariant under $\tsgdF{\tsBaseP}$ it must
be contained in $\tsfdF{\tsBaseP}$. Since the restriction 
commutes with $\tsfdF{\tsBaseP}$ it is by definition contained in the
center of $\tsfdF{\tsBaseP}$.

From the definitions if is clear that any element from the center
of $\tsTLF$ is a tangential symmetry. Using the fact that $\tsTLF$ spans $\tsCenterDistr$ we can show that the center of $\tsTLF$ contains
all tangential symmetries.
\end{proof}
\blockpar
The converse is not true. Not every vector field contained in $\tsCenterDistr$
is a tangential symmetry of $\F$ or $\G$.
This lemma shows that, unless the Lie group associated to a \hdecomposable system is abelian, the method of Darboux does not coincide with the symmetry methods
for partial differential equations introduced by Lie.

\begin{remark}
Given a Lie group there is no systematic way of constructing
a corresponding Darboux integrable
system. Even if we can construct such a system, there is no guarantee that this system corresponds
to a prolonged first order system or second order equation.
Despite this, Vessiot~\cite{Vessiot1939,Vessiot1942b}
succeeded in using the classification of 3-dimensional Lie algebras to
construct a classification of all Darboux integrable
 \hdecomposable Goursat equations.
\end{remark}


\subsection{Examples}

\begin{example}[Wave equation]
On $\tsmaniM=\R^5$ with coordinates $x,y,z,p,q$ introduce the following two distributions:
\begin{align*}
\F &= \vbspand{ \p_x + p\p_z, \p_p } , \quad
	\G = \vbspand{ \p_y +q \p_z, \p_q} .
\end{align*}
The triple $(\tsmaniM, \F, \G)$ defines a Darboux integrable \hdecomposable system.
The contact structure on the first order equation manifold of the wave equation $\pfracTwoflat{z}{x}{y}=0$
is given precisely by this structure.

The invariants of $\F$ are $y,q$ and the invariants of $\G$ are $x,p$. The Darboux projection 
is given by
\begin{align*}
\DarbouxPr: \tsmaniM \to \R^2 \times \R^2 : (x,y,z,p,q) \mapsto (x,p) \times (y,q) .
\end{align*}
 As a set of commuting vector fields on $\R^2\times\R^2$ we take
$\ti{F}_1=\p_x$, $\ti{F}_2=\p_p$,
$\ti{G}_1=\p_y$, $\ti{G}_2=\p_y$. The lifts to vector fields on $\tsmaniM$ are given by
\begin{align*}
F_1 &= \p_x + p\p_z, \quad F_2 = \p_p, \\
G_1 &= \p_y +q\p_z, \quad G_2=\p_q.
\end{align*}
Define $F_3=[F_1,F_2]=-\p_z$ and $G_3=[G_1,G_2]=-\p_z$. The fibers of the projection $\DarbouxPr$ are isomorphic to $\R$ are we can use
$z$ as a coordinate on the fibers.
On the leaves of the completion of ${\G}$ (\ie $x$ and $p$ constant) we have a 3-dimensional Lie algebra with
structure equations
\begin{align*}
\begin{split}
[F_1,F_2] &=F_3, \eqspace
[F_1, F_3] =0, \eqspace
[F_2, F_3] =0 .
\end{split}
\end{align*}
The same holds for the leaves of the completion of ${\F}$. The fibers of the projection are 1-dimensional and have the structure of a 1-dimensional abelian Lie algebra (generated by the vector field $\p_z$).
\end{example}

\newcommand{\Ck}{k}	
\newcommand{\CH}{H}	

\begin{example}
Consider the \hdecomposable second order equation
\begin{align}
z_{xy} &= \frac{a z}{(x+y)^2} .
\end{align}
This equation is Darboux integrable on the $\Ck+1$-jets if $a=\Ck(\Ck+1)$. Hence
for $a=\Ck(\Ck+1)$ the $(\Ck-1)$-th prolonged equation manifold has dimension $5+2\Ck$ and the
prolonged Monge systems define a Darboux integrable \hdecomposable system.

\newcommand{\zxd}{z_{xxx}}
\newcommand{\zyd}{z_{yyy}}

We work out the case $\Ck=2$ in detail. The prolonged equation manifold
has coordinates $x$, $y$, $z$, $p=z_x$, $q=z_y$, $r=z_{xx}$, $t=z_{yy}$, $\zxd$, $\zyd$. The prolonged characteristic systems
are $\F=\vbspant{F_1,F_2}$, $\G=\vbspant{G_1,G_2}$ with 
\begin{align*}
F_1 &= \p_x + p\p_z + r\p_p + \sigma \p_q + \zxd \p_r + Y(\sigma) \p_t
	+ Y(Y(\sigma)) \p_{\zyd}, \\
F_2 &= \p_{\zxd}, \\
G_1 &= \p_y + q\p_z + \sigma\p_p + t\p_q +X(\sigma) \p_r + \zyd \p_t  + X(X(\sigma)) \p_{\zxd} ,\\
G_2 &= \p_{\zyd} .
\end{align*}
Here $\sigma=6z/(x+y)^2$, $X=\p_x+p\p_z+r\p_p + \sigma \p_q$ and
$Y=\p_y+q\p_z+\sigma\p_p + t \p_q$.
Both characteristic systems have two invariants,
\begin{align*}
 I_\F &=\verz{y,\zyd+6\frac{q+t(x+y)}{(x+y)^2} }, \\
 I_\G &=\verz{x,\zxd+6\frac{p+r(x+y)}{(x+y)^2} } .
\end{align*}
If we make a transformation to the variables 
$\trans{x}=x$, $\trans{y}=y$, $\trans{z}=z$, 
$\trans{p}=p$, $\trans{q}=q$,
$\trans{r}=r$, $\trans{t}=t$, $\trans{a}=\zxd+6\fracflat{(p+r(x+y))}{(x+y)^2}$,
$\trans{b}=\zyd+6\fracflat{(q+t(x+y))}{(x+y)^2}$, then
we can easily construct the Darboux projection, choose commuting vector fields,
lifts these vector fields to $\tsmaniM$ and calculate the tangential symmetries.
\begin{acomment}
In the new variables (omitting the tilde)
\begin{align*}
L_1 &= \p_t, \\
L_2 &= \p_q-6H \p_t , \\
L_3 &= \p_z-6H\p_q+24H^2\p_b , \\
L_4 &= H\p_z - H^2\p_p-3H^2\p_q +2H^3\p_r + 10H^3\p_t  , \\
L_5 &= H^2\p_z-2H^3 (\p_p +\p_q)+6H^4(\p_r+\p_t) .
\end{align*}
\end{acomment}
Translated back to the original variables we find that the tangential symmetries of
$\F$ are spanned by
\begin{align*}
L_1 &= \p_t-6\CH\p_{\zyd}, \\
L_2 &= \p_q-6\CH \p_t +30 \CH^2\p_{\zyd}, \\
L_3 &= \p_z-6\CH\p_q+24 \CH^2\p_b -108 \CH^3 \p_{\zyd}, \\
L_4 &= \CH\p_z - \CH^2\p_p-3\CH^2\p_q +2\CH^3\p_r \\
	&\eskip\ + 10\CH^3\p_t +6\CH^4\p_{\zxd} +42\CH^4\p_{\zyd} , \\
L_5 &= \CH^2\p_z-2\CH^3 (\p_p +\p_q) \\
	&\eskip\ +6\CH^4(\p_r+\p_t) -24\CH^4(\p_{\zxd}+\p_{\zyd}) ,
\end{align*}
with $\CH=(x+y)^{-1}$.
The tangential symmetries of $\G$ are given by the same expressions, but
with $x$, $p$, $r$ and $\zxd$ replaced by $y$, $q$, $t$ and $\zyd$, respectively.
The tangential symmetries
commute and the Lie group associated to this Darboux integrable system is the
5-dimensional abelian Lie group.
\begin{acomment}

Solving the equation by the method of Darboux leads to the general solution
\begin{align*}
 z(x,y) &= \pm (x+y)^{k+1} \frac{\p^{2k}}{\p x^k \p y^k} \left( \frac{X(x)+Y(y)}{x+y} \right) .
\end{align*}
\end{acomment}
\end{example}


\appendix
\refstepcounter{section}
\addcontentsline{toc}{section}{Bibliography}
\small
\bibliography{references}

\end{document}